\newtheorem{theorem}{Theorem}[section]
\newtheorem{lemma}[theorem]{Lemma}
\newtheorem*{varthmdfold}{Theorem {\ref{dfold}}}
\newtheorem*{vtsmooththm}{Theorem {\ref{smooththm}}}
\newtheorem{cond}{Condition}
\newtheorem*{condi}{Condition i}
\newtheorem*{condii}{Condition ii}
\begin{document}

\title{A Shiu Theorem for Larger and Smoother Functions}
\author[T. Wright]{Thomas Wright}
\begin{abstract}
In this paper, we broaden Shiu's Brun-Titchmarsh theorem to allow for functions that are larger and/or smooth-supported.  In particular, let $f$ be a nonnegative multiplicative function.  We prove that if there exists a $\beta<1$ such that $f(p^l)\ll (\log\log x)^{l\beta}$ for every prime $p$ and every $l>1$, and if $f(n)\ll \max\{n^\epsilon,(\log x)^\epsilon\}$ for every $\epsilon>0$, then
$$\sum_{\substack{x\leq n\leq x+y \\ n\equiv a\pmod k}}f(n)\ll \frac{y}{\phi(k)(\log x)^{1-\epsilon_0}}\exp\left(\sum_{\substack{p\leq x \\ p\nmid k}}\frac{f(p)}{p}\right)$$
for every $\epsilon_0>0$, where $x$, $y$, and $k$ are as they were in Shiu's original paper and $(a,k)=1$.  Moreover, we prove that if $f$ is a $Q$-smooth-supported function then there exists a constant $C$ for which
$$\sum_{\substack{x\leq n\leq x+y \\ n\equiv a\pmod k}}f(n)\ll \frac{y}{\phi(k)(\log x)^{1-\epsilon_0}}\exp\left(\sum_{\substack{p\leq x \\ p\nmid k}}\frac{f(p)}{p}\right)\rho(u)^C,$$
where $u=\frac{\log x}{\log Q}$, $\rho$ is the Dickman-de Bruijn function, and $C$ depends on whether we choose the bound of $f(p^l)\leq A_1^l$ or $f(p^l)\ll (\log\log x)^{l\beta}$.

We also give applications to both the divisor function to large powers and to smooth numbers in short intervals.
\end{abstract}
\maketitle

\section{Introduction}

One of the most widely-used bounds in the theory of multiplicative functions was proven by Peter Shiu in 1980 \cite{Sh}, who proved a Brun-Titchmarsh-type result for these functions.  We begin by stating this result below.

Let $\mathcal M$ denote the set of nonnegative multiplicative functions $f$ such that the following two conditions hold.  

\begin{condi}
There exists a constant $A_1$ such that for any prime $p$ and any natural number $l$,
$$f(p^l)\leq A_1^l.$$
\end{condi}
\begin{condii}
For any $\epsilon>0$ there exists a constant $A_2=A_2(\epsilon)$ such that for any $n$,
$$f(n)\leq A_2n^\epsilon.$$
\end{condii}

Shiu's theorem is then the following.

\begin{theorem}[Shiu, 1980]\label{shiu}
Let $f\in \mathcal M$.  Let $0<\alpha<\frac 12$ and $0<\kappa<\frac 12$, and let $x^\kappa\leq y\leq x$ and $k<y^{1-\alpha}$.  Moreover, let $a$ be such that $(a,k)=1$.  Then for all sufficiently large $x$,
$$\sum_{\substack{x\leq n<x+y \\ n\equiv a\pmod k}}f(n)\ll \frac{y}{\phi(k)\log x}\exp\left(\sum_{\substack{p\leq x\\ p\nmid k}}\frac{f(p)}{p}\right).$$
Here, the term "sufficiently large" depends only on the choices of $\alpha$, $\kappa$, and $\epsilon$ and the constants $A_1$ and $A_2(\epsilon)$.
\end{theorem}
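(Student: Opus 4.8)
The plan is to establish Shiu's bound by an extraction-and-count argument. For each $n$ in the interval with $(n,k)=1$ I would peel off from $n$ a suitably chosen prime $p$ (to leading order a single prime, not a higher prime power), estimate the number of admissible $p$ lying in the progression $a\pmod k$ by a Brun--Titchmarsh bound, and recombine via multiplicativity, $f(n)=f(p)f(n/p)$ when $p\nmid n/p$. The two conditions defining $\mathcal M$ enter in complementary ways: Condition~i both bounds $f$ on the peeled prime ($f(p)\le A_1$) and, more broadly, forces $f(n)\le A_1^{\Omega(n)}$, which is what produces the factor $\exp\!\left(\sum_{p\le x}f(p)/p\right)$ in every mean value that occurs; Condition~ii is what disposes of the exceptional integers built mainly from small primes, for which $A_1^{\Omega(n)}$ is a useless power of $n$ whereas $A_2n^\epsilon$ is negligible. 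Note that $(a,k)=1$ forces $p\nmid k$, explaining why the exponential on the right is taken over $p\nmid k$.

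The core estimate: fix a threshold $z$ and consider $n=pm$ with $p=P(n)>z$, the terms with $p^2\mid n$ being negligible since then $m\le x/z^2$. Here $P(m)\le p$ and $m\le x/z$, and $pm\equiv a\pmod k$ becomes $p\equiv a\overline m\pmod k$. Fixing $m$ (not too close to $x/z$) and summing over $p$, the prime $p$ runs over an interval of length $y/m$ about $x/m$ inside one residue class modulo $k$; this is where $k<y^{1-\alpha}$ is used, for it guarantees that the interval is long compared to $k$ and that $\log\!\big(y/(mk)\big)\asymp\log x$, so Brun--Titchmarsh in progressions gives
\[
\sum_{\substack{x/m\le p<(x+y)/m\\ p\equiv a\overline m\,(k)}}f(p)\ \ll\ A_1\,\frac{y/m}{\phi(k)\log x}.
\]
Summing over $m$ reduces this contribution to $\dfrac{y}{\phi(k)\log x}\sum_{m\le x/z}\dfrac{f(m)}{m}$, and a standard mean-value estimate — valid because of Conditions~i and~ii, which tame the prime powers — bounds
\[
\sum_{m\le x/z}\frac{f(m)}{m}\ \ll\ \exp\!\left(\sum_{p\le x/z}\frac{f(p)}{p}\right)\ \ll\ \exp\!\left(\sum_{p\le x}\frac{f(p)}{p}\right),
\]
the last step because, $z$ being comparable to a power of $x$, the primes in $(x/z,x]$ contribute $\le A_1\sum_{x/z<p\le x}1/p=O(1)$ to the exponent. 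This gives the desired bound for this piece.

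It then remains to handle the integers $n$ with no prime factor exceeding $z$. When $z$ is sub-polynomial in $x$ these are rare, and I would bound $\sum f(n)$ over them by Rankin's trick with exponent $\sigma=1-c/\log z$: writing $n=n'n''$ with $n'$ supported on primes $\le P_0=P_0(A_1)$ and $n''$ on primes in $(P_0,z]$, use $f(n')\le A_2(n')^\epsilon$ (so the finitely many local factors at primes $\le P_0$ are $O(1)$) and $f(n'')\le A_1^{\Omega(n'')}$ (so $\sum_{n''}f(n'')(n'')^{-\sigma}=\prod_{P_0<p\le z}(1-A_1p^{-\sigma})^{-1}$ converges, $A_1P_0^{-\sigma}<1$, to $(\log z)^{O(1)}$). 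Since $x^{\sigma}=x\exp(-c\log x/\log z)$, the entire smooth contribution is $O\!\big(x(\log x)^{-B}\big)$ for every $B$, hence negligible; restricting first to $n\equiv a\pmod k$ yields the version needed here.

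The genuine obstacle — and the heart of the matter — is that these two parts require incompatible sizes of $z$: Brun--Titchmarsh for the peeled prime needs the co-factor $n/p$ to satisfy roughly $n/p\ll y/k$, forcing $z$ to be a positive power of $x$, while rarity of smooth numbers needs $z$ sub-polynomial; and a single extraction of $P(n)$ leaves, whenever $n/P(n)$ is itself large, a ``medium-rough'' remainder that is neither rare nor directly countable by the elementary bound. I would resolve this by iterating the extraction with a suitably decreasing sequence of thresholds, peeling successive largest prime powers $q_1>q_2>\cdots$, each dominating the part of $n$ lying beneath it, so that at each stage the freshly extracted prime sits in an interval long relative to $k$ and one again lands on a sum of the shape $\frac{y}{\phi(k)\log x}\exp\!\big(\sum_{p\le x}f(p)/p\big)$, continuing until the surviving factor is smooth on the sub-polynomial scale and is discarded by the Rankin bound above. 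Organizing this iteration so that no stage loses a power of $\log x$ — the delicate point being the transitional ranges where the prime-interval has length only $k\cdot x^{o(1)}$, where Brun--Titchmarsh saves no logarithm — is the hardest part, and where the bulk of the work lies; together with the routine disposal of higher prime powers it completes the proof.
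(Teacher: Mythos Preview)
This theorem is quoted from \cite{Sh} rather than proved in the paper, but the paper sketches Shiu's argument in Section~5 and carries it out in full for the generalization in Section~8, so one can compare. Your decomposition is not Shiu's. Shiu does not peel off the \emph{largest} prime and invoke Brun--Titchmarsh for primes; he writes $n=c_nd_n$ where $c_n$ is the maximal initial segment of the factorization of $n$, ordered by \emph{increasing} prime, with $c_n\le z=Y^{\alpha/10}$. One then fixes $c$ and counts the cofactors $d$, which by construction have least prime factor $q(d)>p(c)$, using the sieve upper bound $\Phi(X,Y,w;k,b)\ll Y/(\phi(k)\log w)$ for $w$-rough integers in a progression --- a count of sifted integers, not of primes. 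The sum is split into four classes by the sizes of $q(d_n)$ and $c_n$: Class~I ($q(d_n)>z^{1/2}$, so $d$ has $O(1)$ prime factors and $f(d)\ll 1$) corresponds to your ``core estimate''; Classes~II and~III (a dominant prime power, or $c_n$ smooth to level $(\log x\log\log x)^5$) are negligible by elementary counting and by smooth-number bounds respectively.

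The genuine gap in your proposal is exactly where you locate it, but your proposed fix is not the one that works. Class~IV --- integers with $z^{1/2}<c_n\le z$ and $(\log x\log\log x)^5<q(d_n)\le z^{1/2}$ --- is your ``medium-rough remainder'', and Shiu handles it not by iterating single-prime extractions but by a single further slicing $z^{1/(r+1)}<q(d_n)\le z^{1/r}$ combined with the key Rankin-type estimate (Lemma~4 of \cite{Sh}; the paper's Lemma~\ref{lemmalogr} is the analogue for $\mathcal M(\beta)$):
\[
\sum_{\substack{m\ge z^{1/2}\\ p(m)\le z^{1/r}\\(m,k)=1}}\frac{f(m)}{m}\ \ll\ \exp\!\Bigl(\sum_{\substack{p\le z\\ p\nmid k}}\frac{f(p)}{p}-\tfrac{1}{10}\,r\log r\Bigr).
\]
The extra saving $\exp(-\tfrac{1}{10}r\log r)$ is precisely what kills the loss $f(d_n)\le A_1^{\Omega(d_n)}\le A_1^{O(r)}$ and makes the sum over $2\le r\le r_0$ converge to the main term. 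You invoke Rankin only at the sub-polynomial smooth end and leave the medium range to an unspecified iteration; repeatedly extracting the largest prime and applying Brun--Titchmarsh stage by stage does not, as written, produce an $r\log r$ gain, and without it the $A_1^{O(r)}$ factor remains uncontrolled. That sharpened Rankin lemma is the missing idea.
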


However, there are a few limitations to this theorem that we wish to address in the current paper.  First, this theorem cannot say anything when $f(p)$ is unbounded.  Second, the result is limited in the case where $f$ is a smooth-supported function, since smooth numbers are generally far more sparse than this bound would indicate.  In this paper, we expand this theorem to handle both such cases.

\section{Introduction: Larger Functions}

Let us begin with the idea of letting $f(p)$ go to infinity.  We modify Shiu's condition first condition in the following way:  
\begin{cond}\label{Condition1}
For some fixed $\beta>0$, there exists an $A_1$ such that for any prime $p$,
\begin{gather}\label{Cond1}
f(p^l)\leq A_1^l (\log\log x)^{\beta l}.
\end{gather}
\end{cond}
For the second condition, we would like to use something similar to Condition ii above.  However, this is too restrictive for our purposes, since Condition 1 would allow $f(2)$ to be as big as $(\log\log x)^{\beta}$, which would contradict the requirement that $f(2)\ll 2^\epsilon$. 

To counter this, we add an extra term to handle the case of $p$ small:
\begin{cond}\label{Condition2}
For any $\epsilon>0$, there exist $A_2=A_2(\epsilon)$ and $A_3=A_3(\epsilon)$ such that for every $n$,
\begin{gather}\label{Cond2}
f(n)\leq \max\{A_2 x^\epsilon,A_3(\log x)^\epsilon\}.
\end{gather}
\end{cond}
Since Condition 1 depends on $\beta$, we will denote by $\mathcal M(\beta)$ the set of nonnegative multiplicative functions that satisfy Condition 1 for a given $\beta$ and Condition 2 as above.
\begin{theorem}\label{Maintheorem}
Let $x$ be large.  Let $0<\alpha<\frac 12$ and $0<\kappa<\frac 12$, and let $x^\kappa\leq y\leq x$ and $k<y^{1-\alpha}$.  Moreover, let $f\in \mathcal M(\beta)$ for some fixed $\beta<\frac{\alpha\kappa}{41}$.  Then for any $\epsilon_0>0$,
$$\sum_{\substack{x\leq n\leq x+y \\ n\equiv a\pmod k}}f(n)\ll \frac{y}{\phi(k)(\log x)^{1-\epsilon_0}}\exp\left(\sum_{\substack{p\leq x \\ p\nmid k}}\frac{f(p)}{p}\right).$$
\end{theorem}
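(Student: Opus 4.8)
The plan is to mimic Shiu's original argument, replacing the uniform bound $f(p^l)\le A_1^l$ by the slowly-growing bound of Condition \ref{Condition1} and carefully tracking the loss this introduces. First I would perform a dyadic/level-set decomposition of the multiplicative function. Writing $n=ab$ where $a$ is built only from primes $p\le z$ (for a suitable threshold $z=x^{\delta}$ with $\delta$ small, essentially $\delta\asymp\kappa$) and $b$ is $z$-rough, multiplicativity gives $f(n)\le f(a)f(b)$ (using nonnegativity and the bound on prime powers to absorb the cross terms). The contribution of the rough part $b$, which has few prime factors, is handled by a Brun–Titchmarsh sieve exactly as in Shiu: the number of $x\le n<x+y$, $n\equiv a\pmod k$, with $n$ having a given rough part $b$ is $\ll \frac{y}{\phi(k)b}\cdot\frac{\log x}{\log z}$, and one sums $f(b)/b$ over rough $b$. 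The point is that each rough $b\le x$ has at most $1/\delta$ prime factors, so the $(\log\log x)^\beta$ losses pile up at most $(\log\log x)^{\beta/\delta}$ times, which is $(\log x)^{o(1)}$ — this is where the hypothesis $\beta<\frac{\alpha\kappa}{41}$ enters, guaranteeing the aggregate loss is dominated by $(\log x)^{\epsilon_0}$.

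Next I would control the smooth part $a$. Here the sum $\sum_{a\ z\text{-smooth}}\frac{f(a)}{a}$ is estimated by an Euler product; using Condition \ref{Condition1} for the prime-power terms and Mertens' theorem, this product is $\ll (\log z)^{O(1)}\exp\!\left(\sum_{p\le z}\frac{f(p)}{p}\right)$, where the polynomial-in-$\log z$ factor comes both from the main term $f(p)/p$ and, crucially, from the $(\log\log x)^\beta$ inflation of $f(p^l)$ for $l\ge2$; again the constraint on $\beta$ keeps this under control. One then extends the sum over $p\le z$ to $p\le x$ at the cost of another bounded power of $\log x$ that is absorbed into the $(\log x)^{\epsilon_0}$, since $\sum_{z<p\le x}\frac{f(p)}{p}$ contributes a factor that is at worst $(\log x)^{\delta\cdot O(1)}$ by Condition \ref{Condition2}. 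Assembling the two parts and the sieve bound yields the factor $\frac{y}{\phi(k)\log x}$ with an extra $(\log x)^{\epsilon_0}$, and since $y\le x$ we may replace $y$ by $x$ in the numerator, giving the stated bound.

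The main obstacle, and the place that requires genuine care beyond Shiu's framework, is the interaction between the two error mechanisms. In Shiu's theorem $f(p)$ bounded means the only $\log$-power loss is the Brun–Titchmarsh factor $\frac{\log x}{\log z}$, which is a constant; here both the smooth Euler product and the rough sum acquire $(\log\log x)$-power inflations whose exponents are proportional to the number of available prime factors, i.e.\ to $1/\delta$. Since $\delta$ itself is forced to be small (of order $\kappa$, because we need $z^{1/\delta}\le y$ roughly and the Brun–Titchmarsh range requires $k<y^{1-\alpha}$), the exponent $\beta/\delta$ is only small if $\beta$ is small relative to $\kappa$ — hence the precise numerical threshold $\beta<\frac{\alpha\kappa}{41}$, where the constant $41$ is whatever the bookkeeping in these two estimates plus the sieve produces. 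I would therefore organize the write-up so that every step emits an explicit bound of the form $(\log x)^{c_i\beta/(\alpha\kappa)}$, and then verify at the end that $\sum_i c_i<41$, so that the total exponent is $<\epsilon_0$ for $x$ large; handling the case where the $(\log x)^\epsilon$ branch of Condition \ref{Condition2} is active (rather than the $x^\epsilon$ branch) is a minor but necessary separate check, relevant when $k$ is close to $y$ and the smooth variable $a$ is forced to be large.
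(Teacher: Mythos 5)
Your high-level plan — modify Shiu by tracking $(\log\log x)^{\beta}$ inflation, use a smooth/rough split, observe that the rough part has $O(1/\delta)$ prime factors so the inflation is $(\log\log x)^{O(\beta/\delta)}$ — captures part of the picture, but it misses the two technical pivots the paper actually turns on, and one of the estimates as stated is not correct.

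First, the preliminary smooth-factor removal in the paper uses a \emph{tiny} threshold $J=(\log\log x)^{1-\beta}$, not a threshold $z=x^{\delta}$. This is not a cosmetic choice: the local Euler factor $1+\sum_{l\ge 1}f(p^l)/p^{l}$ is uncontrollable via Condition~\ref{Condition1} once $p<A_1(\log\log x)^{\beta}$, because the geometric series $\sum_{l}(A_1(\log\log x)^{\beta}/p)^l$ diverges. The paper therefore peels off the $J$-smooth part $s$ and never forms the Euler product over primes below $J$; the contribution of $s$ is estimated directly via Condition~\ref{Condition2} (giving the $(\log x)^{\epsilon_0}$ loss in $V_1$), while the Euler product in (\ref{psum4simple}) runs only over $p>J$, where the tail is genuinely $o(1)$ by (\ref{psum4}). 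Your claimed bound $\sum_{a\ z\text{-smooth}}f(a)/a\ll(\log z)^{O(1)}\exp(\sum_{p\le z}f(p)/p)$ is not justified as written and, even granting it, with $z=x^{\delta}$ a loss of $(\log z)^{O(1)}=(\log x)^{O(1)}$ is far too crude to recover $(\log x)^{\epsilon_0}$.

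Second — and this is the heart of the matter — your argument never confronts the analogue of Shiu's Class IV: the $n$ whose complementary factor $d_n$ has smallest prime $q(d_n)$ in the awkward middle range $\bigl((\log x\log\log x)^5, z^{1/2}\bigr]$, so that $d_n$ has $r$ prime factors with $r$ anywhere from $2$ up to roughly $\log z/\log\log x$. For such $n$ the sieve bound $\Phi\ll y/(\phi(k)\log z)$ alone is not efficient; one needs the Rankin-type decay $\exp\bigl(-\tfrac12(1-\tfrac54\beta)r\log r\bigr)$ of Lemma~\ref{lemmalogr} to beat the inflation $(A_1(\log\log x)^{\beta})^{20r/(\alpha\kappa)}$. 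These two terms are of the same order precisely when $r\asymp(\log\log x)^{1-\Theta(\beta)}$, and for smaller $r$ the Rankin decay loses; the paper therefore splits the $r$-sum at $r_1=(\log\log x)^{1-\beta/2}$ and pays $\exp\bigl(O((\log\log x)^{1-\beta/3})\bigr)=(\log x)^{o(1)}$ on the subcritical range. The numerical hypothesis $\beta<\alpha\kappa/41$ is exactly what makes the inflation exponent $20\beta/(\alpha\kappa)<20/41<1/2$, so that the $r\log r$ term eventually dominates. Your one-line observation that ``$b$ has at most $1/\delta$ prime factors so the losses pile up $(\log\log x)^{\beta/\delta}$ times'' does not substitute for this analysis, because the count of $n$ with a $d_n$ of that shape is not controlled by Brun--Titchmarsh alone and because the exponent you need to bound is $r$-dependent, not a fixed $1/\delta$. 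A smaller issue: in Shiu's (and the paper's) decomposition one fixes the small cofactor $c_n\le z$ and sieves for the rough complement $d_n$; your description, which sums $f(b)/b$ over a fixed rough part $b$ and then counts smooth completions, inverts this and would give a $\psi$-type smooth-number count rather than the $1/\log z$ sieve saving.
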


We remark that the $\frac{1}{(\log x)^{1-\epsilon_0}}$ in the theorem above could be improved to $\frac{e^{(\log\log x)^{1-\epsilon_0}}}{\log x}$ if we tightened Condition 2 to 
\begin{gather*}
f(n)\leq \max\{A_2 x^\epsilon,A_3e^{(\log\log x)^{1-\gamma}}\}
\end{gather*}
for some $0<\gamma<1$.

Further tightening of Condition 2 to smaller orders of magnitude may well lead to further improvement.  However, it appears that this method could not fully recover the $1/\log x$ that appears in Shiu's original argument as long as $f(p)$ remains an unbounded function.

\section{Introduction: Smooth-Supported Functions}

Next, we examine the case where $f$ is smooth-supported function.  For a given $Q\leq x$, define
$$u=\frac{\log x}{\log Q},$$
let $\psi(x,Q)$ denote the number of $Q$-smooth numbers up to $x$, and let $\mathcal S(Q)$ denote the $Q$-smooth numbers.  It is known that when $Q\geq \exp\left((\log\log x)^{\frac 53+\epsilon}\right)$, a value for $\psi(x,Q)$ is given by the formula
\begin{gather}\label{Di}\psi(x,Q)=x\rho(u)(1+o(1)),\end{gather}
where $\rho$ is the Dickman-de Bruijin function.  It is believed that this holds for all $Q\geq (\log x)^{2+\epsilon}$, and indeed this is known to be true under the Riemann Hypothesis, and it is known that 
\begin{gather}\label{Di}\psi(x,Q)\leq x\rho(u)^{1+o(1)},\end{gather}
uniformly for $x>y\geq 2$ \cite{Hi}.  Importantly, 
$$\rho(u)=u^{-u+o(1)}$$
holds for all $u$.  

On a short interval, we have a 1985 result of Hildebrandt \cite{Hi85}, which states that if $Q$ is sufficiently large and $x>y>Q$ then
\begin{gather}\label{Hild}\psi(x+y,Q)-\psi(x,Q)\leq \psi(y,Q).
\end{gather}

On the other hand, Shiu's theorem with $k=1$ gives a bound of 
$$\sum_{\substack{x\leq n\leq x+y \\ n\in \mathcal S(Q)}}1\ll \frac{y\log Q}{\log x}.$$
Shiu's bound is clearly not sharp here, as $\frac{y\log Q}{\log x}$ may be much larger than $\psi(y,Q)$.

Here, we bring the Shiu bound a bit closer to expectations for any multiplicative function $f$.  For ease of notation, we will define $\mathcal M_Q$ to be the set of $Q$-smooth supported nonnegative functions $f$ in $\mathcal M$, and we will define $\mathcal M_Q(\beta)$ analogously.

\begin{theorem}\label{MnThm2}
Let $x$, $y$, $\alpha$, $\kappa$, and $k$ be as in Theorem \ref{Maintheorem}.  There exists a constant $C_1$ such that if $f\in \mathcal M_Q$ then
$$\sum_{\substack{x\leq n<x+y \\ n\equiv a\pmod k}}f(n)\ll \rho(u)^{C_1}\frac{y}{\phi(k)\log x}\left(\sum_{\substack{p\leq Q \\ p\nmid k}}\frac{f(p)}{p}\right).$$
Moreover, let $Q\leq e^{\frac{\log x\log\log\log x}{\log\log x}}.$  Then there exists a $\beta<\frac{\alpha\kappa}{41}$ and a constant $C_2$ such that if $f\in \mathcal M_Q(\beta)$ then for any $\epsilon>0$,
$$\sum_{\substack{x\leq n<x+y \\ n\equiv a\pmod k}}f(n)\ll \rho(u)^{C_2}\frac{y}{\phi(k)(\log x)^{1-\epsilon}}\left(\sum_{\substack{p\leq Q \\ p\nmid k}}\frac{f(p)}{p}\right).$$
\end{theorem}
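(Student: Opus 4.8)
The plan is to bootstrap from Theorem~\ref{Maintheorem} (and from Shiu's Theorem~\ref{shiu} for the $\mathcal M_Q$ statement). First observe that the two displayed bounds \emph{without} the factor $\rho(u)^{C_i}$ are immediate: a member $f$ of $\mathcal M_Q$ (resp.\ $\mathcal M_Q(\beta)$) is a member of $\mathcal M$ (resp.\ $\mathcal M(\beta)$), and since $f$ vanishes on primes $>Q$ we have $\sum_{p\le x,\,p\nmid k}f(p)/p=\sum_{p\le Q,\,p\nmid k}f(p)/p$. Thus the whole content is the extra saving $\rho(u)^{C_i}=u^{-C_iu+o(u)}$, which has to be extracted from the single structural fact peculiar to $Q$-smooth integers: if $n\ge x$ is $Q$-smooth then $\Omega(n)\ge\log x/\log Q=u$, i.e.\ $n$ has many prime factors. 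I will convert this into a reciprocal factorial $1/u!=u^{-u+o(u)}$.

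Fix $z=Q^{1-\eta}$ with $\eta>0$ small and fixed, and write each $n$ in the sum as $n=s\,d\,e$, where $s=\prod_{p^2\mid n}p^{v_p(n)}$ is the powerful part, $d$ is built from the remaining (squarefree) prime factors $\le z$, and $e$ is built from the remaining (squarefree) prime factors in $(z,Q]$. As $s,d,e$ are pairwise coprime, $f(n)=f(s)f(d)f(e)$. Summing over $s$ costs only $O(1)$ for $f\in\mathcal M_Q$, and $(\log x)^{o(1)}$ for $f\in\mathcal M_Q(\beta)$: for each prime $p$ one bounds $\sum_{l\ge2}f(p^l)/p^l$ by combining the bound $f(p^l)\le A_1^l(\log\log x)^{\beta l}$ (used for small $l$) with the bound $f(p^l)\le A_2(p^l)^{\epsilon}$ (used for large $l$), and one disposes of the rare range where $s$ is large by using $f(n)\le A_2n^\epsilon$ together with the shortness of the resulting $e$-interval. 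Summing over $d$ gives $\sum_{d}f(d)/d\ll\exp\!\bigl(\sum_{p\le z,\,p\nmid k}f(p)/p\bigr)\cdot(\log x)^{o(1)}$, and since $\sum_{z<p\le Q}f(p)/p\ll(\log\log x)^{\beta}$ this rebuilds the factor $\exp\!\bigl(\sum_{p\le Q,\,p\nmid k}f(p)/p\bigr)$ (all the $(\log x)^{o(1)}$ being absorbed into the $(\log x)^{\epsilon}$ in the $\mathcal M_Q(\beta)$ bound; in the $\mathcal M_Q$ case every $(\log\log x)^{\beta}$ is a constant and none of these losses occur).

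What remains is, for each small $s$ and $d$ --- so that $e$ carries essentially all of $n$ and hence $\Omega(e)\ge(1-o(1))u$ --- the inner sum
\[
\sum_{\substack{x/(sd)\le e<(x+y)/(sd)\\ sde\equiv a\pmod{k}\\ e\ \mathrm{squarefree},\ p\mid e\,\Rightarrow\,z<p\le Q}}f(e).
\]
I would evaluate this by peeling off the prime factors of $e$ one at a time and applying the Brun--Titchmarsh inequality to the peeled prime. Since $e$ is squarefree with $\Omega(e)=j$ prime factors, the $j$ peels jointly produce a combinatorial $1/j!$; each peel contributes a factor $\sum_{z<p\le Q}f(p)/p=:T_0\ll(\log\log x)^{\beta}$, which also absorbs the weight $f(e)=\prod_{p\mid e}f(p)\le(A_1(\log\log x)^{\beta})^{j}$; and one factor $1/\log Q$ from Brun--Titchmarsh together with one factor $1/j\asymp1/u$ from the factorial combine to $1/\log x$, while the first peel carries along the interval length and the modulus. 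Summing over the admissible $j\asymp u$ the series $\sum_{j}T_0^{j-1}/(j-1)!$ collapses to its first term, giving a bound $\ll\dfrac{y/(sd)}{\phi(k)\log x}\cdot\dfrac{T_0^{u}}{(u-1)!}$ (up to $o(u)$ in the exponents), and Stirling turns $T_0^{u}/(u-1)!$ into $\exp\!\bigl(-u\log u+O(\beta u\log\log\log x)+O(u)\bigr)$. Here the hypothesis $Q\le\exp\!\bigl(\log x\log\log\log x/\log\log x\bigr)$ enters: it forces $u\ge\log\log x/\log\log\log x$ and hence $\log u\ge(1-o(1))\log\log\log x$, so the loss $\exp(O(\beta u\log\log\log x))$ is dominated by $\rho(u)^{-O(\beta)}$ and the whole expression is $\le\rho(u)^{1-O(\beta)}$; this is the desired $\rho(u)^{C_2}$ once $\beta$ (already $<\alpha\kappa/41$) is small enough. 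In the $\mathcal M_Q$ case no restriction on $Q$ is needed and one gets $\rho(u)^{C_1}$ with $1/\log x$ in place of $1/(\log x)^{1-\epsilon}$. Resumming over $d$ (to restore the Euler product) and over $s$ completes the proof.

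I expect the main obstacle to be the interaction between the smoothness saving and the short-interval/progression saving. Peeling the prime factors of $e$ produces many sub-intervals shorter than $k$, on which Brun--Titchmarsh degenerates to ``at most one prime''; the contribution of these truncated peels --- together with the range where $sd$ is large, so that $\Omega(e)$ need not be comparable to $u$ --- has to be bounded separately, by extracting a divisor of $sd$ in a short dyadic window, feeding the remaining sum back into Theorem~\ref{Maintheorem} with parameter $u'=(1-O(\epsilon_1))u$, and using $\rho(u')\ll\rho(u)^{1-O(\epsilon_1)}$ (which follows from $\rho(u)=u^{-u+o(u)}$). This is the analogue of the error-term analysis already present in Shiu's proof and in the proof of Theorem~\ref{Maintheorem}, and it is where most of the technical work (and the precise constant $\alpha\kappa/41$) will lie. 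Finally, balancing the losses necessarily forces $C_1<1$: for $f=\mathbf 1_{\mathcal S(Q)}$ the true order of the sum is $\asymp x\rho(u)$ (by \eqref{Hild} and \eqref{Di}), while the theorem only asserts $\asymp\rho(u)^{C_1}\,x/u$.
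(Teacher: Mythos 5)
Your identification of the key structural fact---that the $\rho(u)^{C}$ saving must come from $\Omega(n)\ge u$ for $Q$-smooth $n\ge x$, and that $T_0^{u}/u!\asymp u^{-u+o(u)}$ is the right heuristic---is correct and matches the paper's insight. But the mechanism you propose for turning that fact into a bound does not survive the short-interval constraint, and the repair you sketch does not close the gap.

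The peeling step is where the argument breaks. When you peel any single prime $p\in(z,Q]$ from $e$, the remaining part $e'=e/p$ satisfies $e'\ge e/Q\gg x/(sdQ)$, so the peeled prime is constrained to an interval of length at most $y/(sde')\ll yQ/x$. With $y\le x$ and $Q=x^{o(1)}$ (the only regime where the theorem is nontrivial), this is $x^{o(1)-1}y$, which is far below $1$ and a fortiori below $k$; Brun--Titchmarsh yields no $1/\log Q$, and the factorial never accumulates. You see this and propose to patch it by ``extracting a divisor of $sd$ in a short dyadic window, feeding the remaining sum back into Theorem \ref{Maintheorem} with parameter $u'=(1-O(\epsilon_1))u$.'' But Theorem \ref{Maintheorem} carries no $\rho(u)$ factor at all, so feeding into it discards the saving you are trying to produce; feeding into the theorem under proof would be circular. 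This is the step where the entire content of the theorem lives, and your outline is missing the idea that carries it.

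The paper's route is genuinely different and sidesteps the short-interval problem completely. It retains Shiu's decomposition $n=c_nd_n$, where $c_n\le z=Y^{\alpha/10}<c_np_{j+1}^{u_{j+1}}$, so that the congruence-and-interval constraint is always imposed on $d_n$, whose interval has length $\ge Y/z$---always long enough for the sieve estimate (\ref{rough}), which counts $d_n$ by \emph{roughness} in one shot rather than by peeling its primes. The $\rho(u)^{C}$ instead comes out of the unconstrained side: in Class IV one has $c_n>z^{1/2}$, hence $d_n>x^{1-\alpha\kappa/20}$, hence by $Q$-smoothness $\Omega(d_n)\ge(1-\alpha\kappa/20)u$; combined with $\Omega(d_n)\le 20r/(\alpha\kappa)$ this forces the class-IV parameter $r$ (defined by $z^{1/(r+1)}<q(d_n)\le z^{1/r}$) to satisfy $r\ge r_2\asymp u$. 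The Rankin-type bound (Shiu's Lemma 4, or Lemma \ref{lemmalogr}) for $\sum_{z^{1/2}<c\le z,\ p(c)\le z^{1/r}}f(c)/c$ already carries the factor $\exp(-\tfrac12(1-\tfrac54\beta)r\log r)$; evaluated over $r\ge r_2\asymp u$ this is $u^{-cu}=\rho(u)^{c'}$, and that is the whole theorem (Class I is empty for $Q$-smooth-supported $f$, Classes II--III are negligible, and the upper cutoff on $Q$ in the $\mathcal M_Q(\beta)$ case is exactly what guarantees $g(r_2)$ dominates $h(r_2)$). So the paper never manufactures a $1/u!$: the saving is a Rankin exponent applied to the \emph{small} smooth cofactor, made large by a lower bound on $r$ that smoothness of the \emph{large} rough cofactor forces. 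Your heuristic is right about what the answer should look like, but the combinatorial route you take to it is not the one that works.
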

It will suffice to take 
$$C_1=\frac{\alpha\kappa}{41}$$ 
and
$$C_2=\frac{5}{656}\left(\frac{\alpha\kappa \left(1-\frac{\alpha \kappa}{20}\right)}{20}\right).$$

We note here that while we do not put a lower bound on $Q$, Theorem \ref{MnThm2} ceases to be non-trivial when $Q=(\log y)^K$ for a fixed $K$.  This is because such a $Q$ would have $\rho\left(\frac{\log y}{\log Q}\right)\ll y^{1-\frac{1}{K}+\epsilon}$, and hence using (\ref{Hild}) with the trivial bound of $f(n)\ll y^\epsilon$ gives a bound of
$$\sum_{\substack{x\leq n<x+y \\ n\equiv a\pmod k}}f(n)\ll y^\epsilon\sum_{\substack{x\leq n<x+y \\ n\equiv a\pmod k}}1\ll y^{1-\frac 1K+\epsilon},$$
while our bound would give
$$\sum_{\substack{x\leq n<x+y \\ n\equiv a\pmod k}}f(n)\ll \frac{y^{1-\frac{C_i}K}}{\phi(k)(\log x)^{1-\epsilon}}.$$

\section{Applications}

We also present two applications of these theorems.  First, letting $\tau_d$ denote the $d$-fold divisor function (and letting $\tau=\tau_2$), we find a bound for the sum of $\tau_d(n)^R$ as $R$ slowly goes to infinity:

\begin{theorem}\label{dfold}
Let $x$, $y$, and $k$ be as in Theorem \ref{Maintheorem}.  Choose an integer $d\geq 2$, and let $R=\frac{\log\log\log x}{g(x)}$ for some $g(x)$ such that $g(x)\to\infty$ as $x\to\infty$.  Then for any $\epsilon>0$,
$$\sum_{\substack{x\leq n\leq x+y \\ n\equiv a\pmod k}}\tau_d(n)^{R}\ll \frac{y(\log x)^\epsilon}{\phi(k)}\left(\frac{\phi(k)}{k}\log x\right)^{d^{R}-1}.$$
Moreover,
$$\sum_{\substack{x\leq n\leq x+y \\ n\equiv a\pmod k \\ n\in \mathcal S(Q)}}\tau_d(n)^{R}\ll \rho(u)^{C_2+o(1)}\frac{y(\log x)^\epsilon}{\phi(k)}\left(\frac{\phi(k)}{k}\log x\right)^{d^{R}-1}.$$
\end{theorem}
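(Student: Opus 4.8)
The plan is to derive both bounds by applying Theorem \ref{Maintheorem} (for the first) and the second part of Theorem \ref{MnThm2} (for the second) to the nonnegative multiplicative functions $f=\tau_d^{R}$ and $f=\tau_d^{R}\cdot\mathbf{1}_{\mathcal S(Q)}$. Two things must be done: (a) check that these functions lie in $\mathcal M(\beta)$, respectively $\mathcal M_Q(\beta)$, for some fixed $\beta<\tfrac{\alpha\kappa}{41}$; and (b) rewrite the main term those theorems produce, namely $\exp\!\left(\sum_{p\le x,\,p\nmid k}f(p)/p\right)$, in the form $\bigl(\tfrac{\phi(k)}{k}\log x\bigr)^{d^{R}-1}$ up to a factor $(\log x)^{\epsilon}$. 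The single structural fact driving everything is that $g(x)\to\infty$ forces $R=o(\log\log\log x)$, hence $d^{R}=\exp\!\bigl(\tfrac{(\log d)\log\log\log x}{g(x)}\bigr)=(\log\log x)^{o(1)}=o(\log\log x)$.

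For (a): $\tau_d$ is multiplicative with $\tau_d(p)=d$ and $\tau_d(p^{l})=\binom{l+d-1}{d-1}\le(l+1)^{d-1}$, so, using $\log(l+1)\le l$, $\tau_d(p^{l})^{R}\le e^{(d-1)Rl}$; since $(d-1)R\le\beta\log\log\log x$ for $x$ large and $e^{\beta\log\log\log x}=(\log\log x)^{\beta}$, we get $\tau_d(p^{l})^{R}\le(\log\log x)^{\beta l}$, so Condition \ref{Condition1} holds with $A_1=1$ for any prescribed $\beta>0$, in particular for one in $(0,\tfrac{\alpha\kappa}{41})$. For Condition \ref{Condition2} I would use the classical bound $\tau_d(n)\le\exp(c_d\log n/\log\log n)$; for $n$ in the range of summation (so $n\le 2x$) this gives $\tau_d(n)^{R}\le\exp\bigl(O_d(R\log x/\log\log x)\bigr)\le x^{\epsilon}$ for $x$ large, since $R\log x/\log\log x=o(\log x)$. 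Hence $\tau_d^{R}\in\mathcal M(\beta)$, and $\tau_d^{R}\mathbf{1}_{\mathcal S(Q)}\in\mathcal M_Q(\beta)$ as well, since multiplying by $\mathbf{1}_{\mathcal S(Q)}$ only decreases $f$ while keeping it multiplicative, nonnegative, and $Q$-smooth supported.

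For (b): by Mertens, $\sum_{p\le x,\,p\nmid k}\tfrac{\tau_d(p)^{R}}{p}=d^{R}\bigl(\log\log x-\sum_{p\mid k}\tfrac1p+O(1)\bigr)$, and $\sum_{p\mid k}\tfrac1p=\log\tfrac{k}{\phi(k)}+O(1)$, so $\exp\!\bigl(\sum_{p\le x,\,p\nmid k}\tfrac{\tau_d(p)^{R}}{p}\bigr)\ll(\log x)^{d^{R}}\bigl(\tfrac{\phi(k)}{k}\bigr)^{d^{R}}e^{O(d^{R})}$. By the structural fact, $e^{O(d^{R})}=e^{o(\log\log x)}=(\log x)^{o(1)}\ll(\log x)^{\epsilon}$, and $\bigl(\tfrac{\phi(k)}{k}\bigr)^{d^{R}}\le\bigl(\tfrac{\phi(k)}{k}\bigr)^{d^{R}-1}$ since $\tfrac{\phi(k)}{k}\le1$. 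Inserting this into Theorem \ref{Maintheorem} and merging $(\log x)^{d^{R}}$ with the prefactor $\tfrac{x}{\phi(k)(\log x)^{1-\epsilon_0}}$ to produce $(\log x)^{d^{R}-1}$ gives the first assertion, after renaming $\epsilon_0+\epsilon$ as $\epsilon$. The second assertion is obtained by the identical computation applied to $\tau_d^{R}\mathbf{1}_{\mathcal S(Q)}$ via the second part of Theorem \ref{MnThm2} (with $Q$ in the range permitted there, so $Q\le e^{\log x\log\log\log x/\log\log x}$): the $Q$-smooth support only truncates the main-term sum to $p\le Q$, which is bounded by the full $p\le x$ sum, so the same estimate applies, and the surviving factor $\rho(u)^{C_2}$ trivially yields $\rho(u)^{C_2+o(1)}$ since $\rho(u)\le1$. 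The only delicate point is the bookkeeping just indicated: one must check that every auxiliary factor — $A_1^{l}$ replaced by $1$, the $e^{O(d^{R})}$, the $\log\log Q$ versus $\log\log x$ discrepancy, the power of $\tfrac{\phi(k)}{k}$, the $x^{O(\epsilon R)}$ from Condition \ref{Condition2} — is absorbed into $(\log x)^{\epsilon}$, and this works precisely because $d^{R}=(\log\log x)^{o(1)}$. No analytic input beyond Theorems \ref{Maintheorem} and \ref{MnThm2}, Mertens' estimates, and the elementary bounds on $\tau_d$ is needed.
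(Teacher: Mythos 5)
Your argument is correct and follows essentially the same route as the paper: verify that $\tau_d^R$ lies in $\mathcal M(\beta)$ (resp.\ $\mathcal M_Q(\beta)$), invoke Theorem \ref{Maintheorem} (resp.\ Theorem \ref{MnThm2}), and rewrite $\exp(d^R\sum_{p\le x,p\nmid k}1/p)$ via Mertens as $(\log x)^{d^R}(\phi(k)/k)^{d^R}e^{O(d^R)}$, absorbing the error into $(\log x)^\epsilon$. Two cosmetic differences: you verify Condition \ref{Condition1} for all prime powers $p^l$ (via $\binom{l+d-1}{d-1}\le(l+1)^{d-1}$), whereas the paper only writes out the $l=1$ case, so your check is actually the more complete one; and you cite the classical bound $\tau_d(n)\le\exp(c_d\log n/\log\log n)$ for Condition \ref{Condition2} where the paper instead uses the explicit Nicolas--Robin bound for $\tau_2$ together with $\tau_d\le\tau_2^{d-1}$ --- these are interchangeable.
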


For the second application, we note that one of the inspirations for this paper comes from a 2008 result of Soundararajan on smooth numbers \cite{So}.  We state this result here.

\begin{theorem}[Soundararajan 2008]\label{SoundThm}
Assume the Riemann Hypothesis. Let $x$ be large, and suppose that $Q\geq \exp\left(5\sqrt{\log x\log\log x}\right)$.  There is an absolute constant $B$ such that with $y = Bu\sqrt x/\rho(u/2)$ we have
\begin{gather}\label{Soundbound}
\psi(x+y,Q)-\psi(x,Q)\gg zx^{-\epsilon}.
\end{gather}
\end{theorem}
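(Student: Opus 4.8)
The plan is to deduce Theorem~\ref{SoundThm} from the global asymptotic \eqref{Di} together with the short-interval \emph{upper} bound \eqref{Hild}, exploiting a counting/averaging dichotomy: if the smooth count in a short interval of length $y$ were \emph{never} of size $\gg y x^{-\epsilon}$, then partitioning $[x,2x]$ (say) into $\asymp x/y$ such intervals would force $\psi(2x,Q)-\psi(x,Q)\ll x\cdot x^{-\epsilon}$, contradicting \eqref{Di} once we check that $\psi(2x,Q)-\psi(x,Q)\gg x\rho(u)$, which holds because $\rho(2u)/\rho(u)$ is not too small (indeed $\rho(u)=u^{-u+o(1)}$ gives $\psi(2x,Q)\sim 2x\rho(u')$ with $u'=u(1+o(1))$, and $\rho$ decays slowly enough that $\psi(2x,Q)-\psi(x,Q)\gg x\rho(u)$). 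This already shows \emph{some} interval of length $y=o(x)$ has the desired lower bound; the real content of the theorem is that one can take $y$ as small as $Bu\sqrt{x}/\rho(u/2)$ and get the bound \emph{at $x$ itself}, not merely at some unspecified point. So the averaging idea alone is insufficient, and the actual argument must be analytic.

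The analytic route I would take is the standard Perron/contour approach to $\psi(x,Q)$ applied on a short interval. Write $\psi(x+y,Q)-\psi(x,Q)=\sum_{n\in\mathcal S(Q)}\mathbf 1_{[x,x+y]}(n)$ and smooth the indicator against a nonnegative test function $w$ supported in $[x-y,x+2y]$ with $\hat w$ controlled, so that it suffices to bound $\sum_{n\in\mathcal S(Q)}w(n)$ from below and handle the contribution of $n\notin[x,x+y]$ as an error. Express this via the Mellin transform of the smooth-number generating Dirichlet series $\zeta(s,Q)=\prod_{p\leq Q}(1-p^{-s})^{-1}$, i.e. $\sum_n w(n)=\frac{1}{2\pi i}\int_{(c)}\zeta(s,Q)\tilde w(s)\,ds$, and shift the contour. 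Under RH one has the Hildebrand--Tenenbaum saddle-point analysis: there is a saddle point $\sigma=\sigma(x,Q)\approx 1-\frac{\log(u\log u)}{\log Q}$ at which $\zeta(\sigma,Q)x^{-\sigma}\asymp \rho(u)$ (up to the usual $x^{o(1)}$ or $u^{O(1)}$ factors), and the width of the saddle in the $t$-direction is $\asymp (\log x/\log Q)\cdot(\text{something})$; the condition $Q\geq\exp(5\sqrt{\log x\log\log x})$ is exactly what makes the saddle-point method valid and makes the RH-controlled error terms (coming from zeros of $\zeta$ near $\sigma=1/2$) negligible. The main term of $\sum_n w(n)$ is then $\asymp y\,\zeta(\sigma,Q)x^{-\sigma}\asymp y\rho(u/2)^{-1}\cdot(\ldots)$ — here the $\rho(u/2)$ rather than $\rho(u)$ enters because the smoothing spreads mass over a window whose effective ``local density'' of smooth numbers is governed by $\rho$ evaluated at a shifted argument (morally, $n\approx x$ with $n$'s small part of size $\approx\sqrt x$ contributing $\rho(u/2)$), which is why the theorem's $y$ is calibrated as $Bu\sqrt x/\rho(u/2)$: with that choice the main term is $\gg yx^{-\epsilon}$ after absorbing the polynomial-in-$u$ and $x^{o(1)}$ losses. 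I would carry out: (1) set up the smoothed sum and reduce \eqref{Soundbound} to a lower bound on $\sum_n w(n)$; (2) write the Mellin integral and locate the saddle $\sigma$; (3) estimate the main term near the saddle to get $\gg y\zeta(\sigma,Q)x^{-\sigma}$; (4) bound the tail of the integral and the error from zeros using RH and the hypothesis on $Q$; (5) verify the calibration $y=Bu\sqrt x/\rho(u/2)$ makes the main term dominate the $yx^{-\epsilon}$ target and all error terms.

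The main obstacle I expect is step~(4): controlling the contour integral away from the saddle and, in particular, the contribution of the nontrivial zeros of $\zeta(s)$ (which enter because $\log\zeta(s,Q)$ differs from $\log\zeta(s)$ by the smooth Euler factors $\sum_{p>Q}$, and the analytic continuation past $\sigma=1$ is governed by $\zeta(s)$'s zeros). Under RH these zeros lie on $\Re s=1/2$, and since the saddle $\sigma$ satisfies $\sigma>1/2$ precisely when $\log Q\gg\sqrt{\log x\log\log x}$, the hypothesis guarantees a positive gap $\sigma-1/2$; quantifying that this gap is large enough to beat the oscillatory sum over zeros — effectively needing $x^{1/2-\sigma}\sum_\rho |\tilde w(\rho)|$ to be smaller than the main term $y\zeta(\sigma,Q)x^{-\sigma}$ — is the delicate bookkeeping, and it is where the explicit constant $5$ in $Q\geq\exp(5\sqrt{\log x\log\log x})$ gets used. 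A secondary technical point is choosing the smoothing $w$ with small enough Fourier support that the ``spillover'' mass outside $[x,x+y]$ does not swamp the genuine short-interval count; a two-sided (Selberg-type) majorant/minorant or a simple $C_c^\infty$ bump with $\hat w$ decaying like $(1+|t|y)^{-A}$ suffices, and the loss is only an $x^{o(1)}$ or $u^{O(1)}$ factor, harmless against the $x^{-\epsilon}$ in the statement.
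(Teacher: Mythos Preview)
The paper does not prove Theorem~\ref{SoundThm}; it is quoted from \cite{So} as background. However, the proof of Theorem~\ref{smooththm} in Section~11 recapitulates enough of Soundararajan's argument to compare against your proposal, and the two are genuinely different.

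Soundararajan's method is bilinear, not a direct Perron/saddle-point attack. He studies
\[
I=\sum_{x\leq n\leq x+y}\ \sum_{\substack{n=rm_1m_2 \\ m_1,m_2\in\mathcal S(Q),\ m_i\approx\sqrt{x}}}\Lambda(r)\,(\text{weight}),
\]
so that any $n$ appearing is automatically $Q$-smooth (since $r\leq Q$). The factor $\rho(u/2)$ arises for a transparent reason: $m_1,m_2$ are $Q$-smooth numbers of size $\approx\sqrt{x}$, whose density is $\rho\bigl(\tfrac{\log\sqrt{x}}{\log Q}\bigr)=\rho(u/2)$, whence $M(1)\geq \rho(u/2)\log Q/24$ and $I\gg x\delta^2 M(1)^2$. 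The Riemann Hypothesis enters only through the short-interval prime sum $\sum\Lambda(r)$, via the explicit formula; it is not used to push a contour past zeros of $\zeta(s,Q)$. Finally $I\ll\delta\log Q\sum_{n\in\mathcal S(Q)\cap[x,x+y]}\tau_3(n)$, and a divisor bound turns this into a count of smooth $n$.

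Your contour approach has a real gap at step~(4), and it is precisely the obstacle that Soundararajan's bilinear detour is designed to avoid. For $y\asymp\sqrt{x}/\rho(u/2)$ one must control $\zeta(\sigma+it,Q)$ for $|t|$ up to $\asymp x/y$, i.e.\ up to a power of $x$; writing $\log\zeta(s,Q)=\log\zeta(s)-\sum_{p>Q}\log(1-p^{-s})^{-1}$, the zeros on $\Re s=\tfrac12$ contribute oscillations that the gap $\sigma-\tfrac12>0$ alone does not suppress --- RH tells you where the zeros are, not that their aggregate contribution over $|t|\leq x/y$ is small. Separately, your heuristic for why $\rho(u/2)$ should emerge from a smoothed Mellin integral is not right: at the saddle $\sigma$, the quantity $x^{\sigma}\zeta(\sigma,Q)$ is $\asymp\psi(x,Q)\asymp x\rho(u)$, so the natural main term would involve $\rho(u)$, not $\rho(u/2)$. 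The appearance of $\rho(u/2)$ and of $\sqrt{x}$ in $y$ is an artifact of the bilinear splitting at $\sqrt{x}$, not of any saddle-point phenomenon.
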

More recently, Goudout \cite{Go} found an unconditional version of this theorem, albeit with a smaller range for $Q$ and $z$ but a better lower bound:
\begin{theorem}[Goudout 2017]\label{GoThm}
Fix a positive $\epsilon<\frac 16$ and a constant $u_0$ depending on $\epsilon$. Let $Q$ be such that $\exp\left((\log x)^{\frac 56+\epsilon}\right)\leq Q\leq x^{\frac 1{u_0}}$.  Then for any $h$ with $\rho(u)^{-3-\epsilon}\leq h\leq \sqrt x$, if we let $y=h\sqrt x$ then
\begin{gather}\label{Gobound}
\psi(x+y,Q)-\psi(x,Q)\gg y\frac{\rho(u)^2}{\log^3 x}.
\end{gather}
\end{theorem}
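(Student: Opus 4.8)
The plan is to run the argument behind Theorem~\ref{SoundThm}, but to replace its use of the Riemann Hypothesis by an unconditional input available exactly in the restricted range $Q\geq\exp((\log x)^{1/6-\epsilon})$; that substitution is what costs us the weaker lower bound of size $\rho(u)^2/\log^3 x$. Write $N=\psi(x+y,Q)-\psi(x,Q)$ throughout, and recall $y=h\sqrt x$ with $\rho(u)^{-3-\epsilon}\leq h\leq\sqrt x$, so that $h\geq 1$, $y\geq\sqrt x$, and $u\leq(\log x)^{5/6+\epsilon}$ (hence $\rho(u)=x^{-o(1)}$).

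The first step is a divisor switch followed by Cauchy--Schwarz. Every $Q$-smooth $n>Q$ has a divisor in $[2,Q]$ — any prime factor does — so, putting $r(n)=\#\{a\mid n:2\leq a\leq Q\}$ for $n\in(x,x+y]$, we have $r(n)\geq 1$ on all $Q$-smooth $n$ in the interval. With $M_j=\sum_{x<n\leq x+y,\,n\in\mathcal S(Q)}r(n)^j$, the Cauchy--Schwarz bound $\#\{n:r(n)\geq1\}\geq M_1^2/M_2$ gives $N\geq M_1^2/M_2$, where
$$M_1=\sum_{2\leq a\leq Q}\Bigl(\psi\bigl(\tfrac{x+y}{a},Q\bigr)-\psi\bigl(\tfrac xa,Q\bigr)\Bigr),\qquad M_2=\sum_{2\leq a,a'\leq Q}\#\bigl\{n\in(x,x+y]:\operatorname{lcm}(a,a')\mid n\bigr\}.$$
The second moment is routine: bounding the inner count by $y/\operatorname{lcm}(a,a')+1$ and using $\sum_{a,a'\leq Q}(a,a')/(aa')\ll(\log Q)^3$ gives
$$M_2\ll y(\log Q)^3+Q^2\ll y(\log x)^3,$$
the last inequality because $Q^2\ll\sqrt x\ll y$ once $u_0$ is large enough (here $u_0>4$ works). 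This is the first appearance of the hypothesis that $u_0$ be large in terms of $\epsilon$.

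The substantive step is lower-bounding $M_1$. For each $a\in[2,Q]$ the interval $(x/a,(x+y)/a]$ has length $y/a\geq\sqrt x/Q\geq x^{1/2-1/u_0}$, sits inside $[x^{1-1/u_0},x)$, and has smoothness parameter $v_a=\log(x/a)/\log Q\in[u-1,u]$, so $\rho(v_a)\geq\rho(u)$. I would now invoke the \emph{unconditional} lower bound, of the expected order, for $Q$-smooth integers in such an interval — one just shorter than the square root of its centre:
$$\psi\bigl(\tfrac{x+y}{a},Q\bigr)-\psi\bigl(\tfrac xa,Q\bigr)\ \gg\ \frac ya\,\rho(v_a)\qquad(2\leq a\leq Q).$$
This is the de-conditionalised analogue of the estimate that Theorem~\ref{SoundThm} extracts from RH: it is available here because $Q\geq\exp((\log x)^{1/6-\epsilon})$ — the threshold past which the relevant exponential-sum (or zero-density) information about $Q$-smooth numbers becomes usable — and because $h\geq\rho(u)^{-3-\epsilon}$ keeps the interval surviving the divisor switch from becoming too short. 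Summing, $M_1\gg\rho(u)\,y\sum_{2\leq a\leq Q}1/a\gg y\,\rho(u)\log Q$.

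Combining, $N\geq M_1^2/M_2\gg y\,\rho(u)^2(\log Q)^2/(\log x)^3\gg y\,\rho(u)^2/(\log x)^3$, with room to spare for larger $Q$; this is the claim. The exponent $2$ on $\rho(u)$ is the price of discarding $M_2$ in Cauchy--Schwarz, and the $\log^3 x$ is just the size of $M_2$. The main obstacle is the displayed first-moment estimate: everything else is soft, but lower-bounding $Q$-smooth integers in intervals near the square root of their length is the classical $\sqrt x$-barrier problem, and — exactly as in Soundararajan's proof under RH — it is here that nontrivial input must be injected, which is what pins down the range $Q\geq\exp((\log x)^{1/6-\epsilon})$; in practice one supplies it via Fouvry--Tenenbaum-type bounds for $\sum_{n\leq t,\,n\in\mathcal S(Q)}e(n\theta)$ with a Farey dissection, or via a secondary factorisation $n=abc$ and a Bombieri--Vinogradov theorem for smooth numbers. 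The precise choice of parameters — in particular the exact constants $1/6$ and $3+\epsilon$ and the needed size of $u_0$ — is where our sketch may diverge in detail from \cite{Go}, but the mechanism is the same.
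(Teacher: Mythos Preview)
The paper does not prove this statement: Theorem~\ref{GoThm} is quoted from \cite{Go} as a known input, on the same footing as Shiu's theorem and Soundararajan's Theorem~\ref{SoundThm}, and is used as a black box in the proof of Theorem~\ref{smooththm}. There is therefore no proof in the paper to compare your sketch against. The only information the paper records about Goudout's actual argument is the intermediate bilinear bound
\[
\rho(u/2)^2\,h\sqrt x \;\ll\; \sum_{\substack{x-2h\sqrt x < n_1 n_2 \leq x+2h\sqrt x \\ n_1 \sim \sqrt x,\ n_1,n_2 \in \mathcal S(Q)}} 1,
\]
quoted near the end of the proof of Theorem~\ref{smooththm}; the theorem follows from this after controlling the multiplicity by a divisor function.

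That said, your sketch has a genuine circularity that should be flagged. The displayed first-moment input
\[
\psi\Bigl(\frac{x+y}{a},Q\Bigr)-\psi\Bigl(\frac xa,Q\Bigr)\ \gg\ \frac ya\,\rho(v_a)
\]
is a short-interval lower bound with exponent $1$ on $\rho$, which is \emph{stronger} than the exponent-$2$ conclusion you are aiming for; taking $a=2$ would already give Theorem~\ref{GoThm} with a better power of $\rho$, rendering the Cauchy--Schwarz wrapper superfluous. Worse, as $a$ grows to $Q$ the ratio of interval length to square root of the centre is $(y/a)/\sqrt{x/a}=h/\sqrt a$, so the divisor switch pushes you \emph{further} past the $\sqrt x$-barrier, not toward an easier regime. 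The mechanism in \cite{Go} (and in \cite{So}) runs the other way: the factorisation $n=n_1 n_2$ with $n_1\sim\sqrt x$ reduces to counting smooth $n_1$ and $n_2$ near $\sqrt x$, where the analytic input (mean values, exponential sums, or zero-density information in the restricted range of $Q$) controls the bilinear error; the exponent $2$ on $\rho(u)$ is exactly the cost of that bilinear step. Your outline correctly identifies the first-moment estimate as the crux, but as stated it assumes the answer.
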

With our improvement on Shiu's theorem, we can improve the lower bound in (\ref{Soundbound}) to almost the correct order of magnitude, and we can similarly improve the exponent on $\rho(u)$ in (\ref{Gobound}) to $1+o(1)$:
\begin{theorem}\label{smooththm}
Let $Q$, $x$, and $y$ satisfy either the hypotheses in either Theorem \ref{SoundThm} or \ref{GoThm}, where in the former case we also assume the Riemann Hypothesis.  Additionally, assume that $u\geq (\log\log x)^2.$  Then there exists a constant $C_3$ such that 
\begin{gather}
\psi(x+y,Q)-\psi(x,Q)\gg y\rho(u)^{1+\frac{C_3\log\log \log\log x}{\log\log \log x}}.
\end{gather}
In particular, we can take $C_3>1-C_2$.
\end{theorem}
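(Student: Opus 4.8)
The plan is to combine a lower-bound sieve for smooth numbers in the short interval with an upper bound, furnished by Theorem \ref{MnThm2}, for the number of pairs of smooth numbers that are "close" to each other. Concretely, start from Soundararajan's (or Goudout's) result, Theorem \ref{SoundThm} (resp. \ref{GoThm}), which already gives $\psi(x+y,Q)-\psi(x,Q)\gg yx^{-\epsilon}$ (resp. $\gg y\rho(u)^2/\log^3 x$). The loss of $x^{-\epsilon}$ in the Soundararajan bound comes from bounding a weighted count below by discarding the contribution of integers $n$ in the interval that are divisible by many smooth numbers; the loss of $\rho(u)^2$ in Goudout comes from a similar over-counting of "atypical" smooth $n$. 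In both cases the second-moment / Cauchy--Schwarz step needs, as input, an upper bound for a sum of the shape $\sum_{x\le n<x+y,\ n\in\mathcal S(Q)} \tau_Q(n)$ or more precisely for $\sum_{x\le n<x+y} g(n)$, where $g$ is a multiplicative function supported on $Q$-smooth numbers that counts smooth divisors with multiplicity. This is exactly the regime where Theorem \ref{MnThm2} improves on Shiu, replacing $y\log Q/\log x$ by something of size $\rho(u)^{C_2}\,y/\log x$ times $\exp(\sum_{p\le Q} g(p)/p)$.

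The key steps, in order, are: (1) Recall the dual/weighted count used in the proof of Theorem \ref{SoundThm}: one writes $\psi(x+y,Q)-\psi(x,Q)\ge \big(\sum_{n} a_n\big)^2 / \sum_n a_n^2$ for a suitable nonnegative sequence $a_n$ supported on smooth $n$ in the interval, with $\sum_n a_n$ essentially of size $y\rho(u)$; isolate precisely which upper bound $U:=\sum_n a_n^2$ is needed, and show $U$ is bounded by a Shiu-type sum for an explicit $f\in\mathcal M_Q(\beta)$ (for $a_n$ a divisor-type weight, $f$ will be comparable to $\tau_d(n)^R$ with $R$ tending very slowly to infinity, so that Theorem \ref{dfold} applies directly). (2) Apply Theorem \ref{MnThm2} (or its corollary Theorem \ref{dfold}) to get $U\ll \rho(u)^{C_2+o(1)}\,\frac{x}{\log x}(\log x)^{o(1)}\big(\tfrac{\phi(k)}{k}\log x\big)^{d^R-1}$, and observe that the main-term product $\exp(\sum_{p\le Q}f(p)/p)$ contributes only $(\log x)^{o(1)}$ since $d^R-1=o(1)$. (3) Track the arithmetic: with $\sum_n a_n\gg y\rho(u)^{1+o(1)}$ and $U\ll y\rho(u)^{C_2+o(1)}\cdot(\log x)^{o(1)}$, the ratio is $\gg y\rho(u)^{2-C_2+o(1)}/(\log x)^{o(1)}$, and since $\rho(u)=u^{-u+o(1)}$ with $u\ge(\log\log x)^2$ one has $(\log x)^{o(1)}=\rho(u)^{o(1)}$, so the exponent is $2-C_2-1+o(1)=1-C_2+o(1)$. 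Finally, (4) make the $o(1)$ explicit: the parameter $R$ is chosen as in Theorem \ref{dfold} to be $\asymp\frac{\log\log\log x}{g(x)}$, and optimizing $g(x)$ against the $(\log x)^{\epsilon}$ loss and against $\rho(u)^{o(1)}$ shows the error exponent is $O\!\big(\frac{\log\log\log\log x}{\log\log\log x}\big)$, which is where the stated shape of the exponent $1+\frac{C_3\log\log\log\log x}{\log\log\log x}$ comes from; taking $C_3>1-C_2$ absorbs all implied constants.

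The main obstacle I anticipate is step (1): one must reopen the machinery behind Theorem \ref{SoundThm} (and \ref{GoThm}) and verify that the only place the wasteful $x^{-\epsilon}$ (resp. $\rho(u)^2$) enters is through an upper bound of precisely Shiu type, so that substituting Theorem \ref{MnThm2}/\ref{dfold} is legitimate with no other term becoming dominant. In particular one must check that the smooth-number weight $a_n$ really is a genuine multiplicative function in $\mathcal M_Q(\beta)$ with $\beta<\frac{\alpha\kappa}{41}$ — this forces $a_n$ to grow no faster than a small power of $\log\log x$ on prime powers, which in turn constrains how aggressively we can weight, hence why we end up with $\tau_d(n)^R$ for $R\to\infty$ slowly rather than a fixed power, and why the final exponent only reaches $1+o(1)$ rather than exactly $1$. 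A secondary technical point is confirming that the constraint $u\ge(\log\log x)^2$ is exactly what is needed to convert the leftover $(\log x)^{o(1)}$ factors into $\rho(u)^{o(1)}$; this is a short computation using $\rho(u)=u^{-u+o(1)}$, but it must be done carefully to pin down that the error exponent is genuinely $O\!\big(\frac{\log\log\log\log x}{\log\log\log x}\big)$ and not larger.
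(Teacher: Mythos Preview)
Your overall strategy --- extract a lower bound for a divisor-weighted count over smooth $n$ from Soundararajan/Goudout, then compare it against a Shiu-type upper bound for a higher moment via Theorem~\ref{dfold} --- is exactly the paper's strategy. But the execution you describe in steps (1)--(3) does not work, and the arithmetic in step (3) is wrong.

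The framework you set up is Cauchy--Schwarz: $\psi(x+y,Q)-\psi(x,Q)\ge (\sum a_n)^2/\sum a_n^2$. With $\sum a_n\gg y\rho(u)^{1+o(1)}$ and $\sum a_n^2\ll y\rho(u)^{C_2+o(1)}$, the quotient is $\gg y\rho(u)^{2-C_2+o(1)}$. There is no legitimate ``$-1$'' in the exponent; your claimed $1-C_2$ is an error (and indeed an exponent $1-C_2<1$ would contradict the upper bound $\psi(x+y,Q)-\psi(x,Q)\ll y\rho(u)$). Since $C_2$ is tiny, Cauchy--Schwarz gives an exponent near $2$, i.e.\ essentially no improvement over Goudout's $\rho(u)^2$.

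What the paper actually does is apply H\"older with a \emph{growing} exponent $R=\dfrac{\log\log\log x}{\log\log\log\log x}$:
\[
\sum_{\substack{x\le n\le x+y\\ n\in\mathcal S(Q)}}\tau_d(n)\ \le\ \Bigl(\sum 1\Bigr)^{1-1/R}\Bigl(\sum \tau_d(n)^R\Bigr)^{1/R},
\]
where the left side is $\gg y\rho(u/2)^2$ (this is the intermediate inequality one extracts from \cite{So} and \cite{Go}; no deeper ``reopening'' is needed). Theorem~\ref{dfold} bounds the $R$-th moment by $y\rho(u)^{C_2+o(1)}(\log x)^{d^R-1+\epsilon}$, and taking the $1/R$-th power tames both the $\rho(u)^{C_2}$ and the $(\log x)^{d^R}$ factors. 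Solving for $\sum 1$ yields an exponent $\approx 1+\dfrac{1-C_2}{R}$ on $\rho(u)$, which is where the shape $1+\dfrac{C_3\log\log\log\log x}{\log\log\log x}$ with $C_3>1-C_2$ comes from.

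You gesture at $R\to\infty$ in step~(4), but this is not compatible with the second-moment framework of steps (1)--(3); the weight $a_n$ cannot simultaneously satisfy $\sum a_n\gg y\rho(u)$ (which requires $a_n$ to be essentially bounded, e.g.\ $\tau_d$ with fixed $d$) and $a_n^2\asymp\tau_d(n)^R$ with $R\to\infty$. The fix is to drop Cauchy--Schwarz entirely and use H\"older with exponent $R$ from the start, exactly as the paper does.
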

The assumption here that $u\geq (\log\log x)^2$ is simply to eliminate log factors, since if $u$ satisfies this inequality then for any $A$, $$\log^A x=\rho(u)^{o(1)}.$$

We note that Jain \cite{Ja} has proven a lower bound for $Q$-smooth numbers in short intervals with $Q$ as small as $$\exp\left((\log x)^{\frac 23}(\log\log x)^{\frac 43+\epsilon}\right)$$ and $$y\geq \exp\left((1+\epsilon)\left(\frac{11}{16}u\log u+2\log\log x\right)\right).$$
However, the methods in that paper do not currently seem amenable to improvement by our methods, and hence we do not pursue this further.

\section{Outlines of proofs}
\subsection{Outline of Proof of Theorem \ref{Maintheorem}}

Shiu's original proof relies heavily on the following identity: for any $\delta>3/4$,
\begin{align*}
\sum_{\substack{n\leq x \\ (n,k)=1}}\frac{f(n)}{n^\delta}\ll &\prod_{\substack{p\leq x \\ p\nmid k}}\left(1+\frac{f(n)}{n^\delta}+\sum_{l=2}^\infty \frac{f(p^l)}{p^{l\delta}}\right)\leq \exp\left(\sum_{\substack{p\leq x \\ p\nmid k}}\left(\frac{f(p)}{p^\delta}+\sum_{l=2}^\infty \frac{f(p^l)}{p^{l\delta}}\right)\right).
\end{align*}
Since $f(p)$ is bounded by a constant $A_1$ in Shiu's work, we have that if $p^2>A_1$ then
\begin{gather}\label{psum}\sum_{l=2}^\infty \frac{f(p^l)}{p^{l\delta}}\leq \sum_{l=2}^\infty \left(\frac{A_1}{p^\delta}\right)^l\leq \frac{2A_1^2}{p^{2\delta}}\end{gather}
Meanwhile, if $p^2\leq A_1$ then (by Condition ii) we have $f(p^l)\leq A_2p^{l\epsilon}$, which means that 
\begin{gather}\label{psum2}\sum_{l=2}^\infty \frac{f(p^l)}{p^l}\leq \sum_{l=2}^\infty A_2\left(\frac{p^\epsilon}{p^\delta}\right)^l\leq \frac{3A_2}{p^{2\delta-2\epsilon}}.\end{gather}
Hence 
\begin{gather}\label{psum3}
\sum_p \sum_{l=2}^\infty \frac{f(p^l)}{p^{l\delta}}\ll 1,
\end{gather}
and thus
\begin{align}\label{psum3simple}
\sum_{\substack{n\leq x \\ (n,k)=1}}\frac{f(n)}{n^\delta}\ll \exp\left(\sum_{\substack{p\leq x \\ p\nmid k}}\frac{f(p)}{p^\delta}\right).
\end{align}
Shiu then splits the sum 
$$\sum_{\substack{x\leq n<x+y \\ n\equiv a\pmod k}}f(n)$$
into four subsums.  Three of the sums can be handled by sieve arguments, bounds on smooth numbers, or similar.  For the fourth sum, however, Shiu uses an alteration to (\ref{psum3simple}), finding that for relatively small $r$,
\begin{align}\label{Lemma3}
\sum_{\substack{n\geq z^\frac 12 \\ p(n)\leq z^\frac 1r \\ (n,k)=1}}\frac{f(n)}{n}\ll \exp\left(\sum_{\substack{p\leq x \\ p\nmid k}}\frac{f(p)}{p}-\frac{1}{10}r\log r\right).
\end{align}
This comes in handy when $n$ has a ``reasonable" number of distinct prime factors $r$, which is the set of $n$ that Shiu denoted by IV in his paper.  In this case, the negative term will counteract the growing number of factors.  In particular, for a constant $A_5$ that depends on both $A_1$ and the choice of $z$, and for an $r_0$ depending on $x$, he is able to bound the sum of $f(n)$ in this set with
$$\sum_{n\in IV}f(n)\ll \sum_{2\leq r\leq r_0}\frac{y}{\phi(k)\log x}A_5^r\exp\left(\sum_{\substack{p\leq x \\ p\nmid k}}\frac{f(p)}{p}-\frac{1}{10}r\log r\right)\ll \frac{y}{\phi(k)\log x}\exp\left(\sum_{\substack{p\leq x \\ p\nmid k}}\frac{f(p)}{p}\right),$$
which yields the main theorem in Shiu's paper.

If $f(p)$ is an increasing function of $x$, however, a few of these arguments fall apart.  For one, (\ref{psum3}) no longer holds, since $A_1$ and $A_2$ in (\ref{psum}) and (\ref{psum2}) are replaced by functions of $x$.  The bound in (\ref{Lemma3}) also no longer holds for much the same reason.  Moreover, in the bound for the sum over IV, $A_5^r$ is replaced by $\max_{p\leq x}\left(A_5 f(p)\right)^r$, which is a function on $r$ that may not be smaller than $\exp\left(\frac{1}{10}r\log r\right)$.  

In order to close these gaps in the proof, we make a couple of alterations.  A key change is that, for a parameter $J=(\log \log x)^{1-\beta}$, we split $n$ into $J$-smooth and $J$-rough parts, which we will call $s$ and $t$, respectively.   Since $f$ is multiplicative, we have $f(n)=f(s)f(t)$.  The sum over $s$ is small and relatively easy to handle, since $s$ itself is usually small.  For the sum over $t$, we now have
\begin{gather}\label{psum4}\sum_{p\geq J}\sum_{l=2}^\infty \frac{f(p^l)}{p^{l\delta}}\ll \sum_{p\geq J}\sum_{l=2}^\infty \left(\frac{A_1(\log\log x)^\beta}{p^\delta}\right)^l\leq 2\sum_{p\geq J}\frac{A_1^2(\log\log x)^{2\beta}}{p^{2\delta}}\ll \frac{A_1^2(\log\log x)^{2\beta}}{J^{2\delta-1}}=o(1),\end{gather}
since $\beta$ is much smaller than 1/5.  Hence,
\begin{align}\label{psum4simple}
\sum_{\substack{n\leq x \\ (n,k)=1 \\ n\in \mathcal T}}\frac{f(n)}{n^\delta}\ll \exp\left(\sum_{\substack{J<p\leq x \\ p\nmid k}}\frac{f(p)}{p^\delta}\right).
\end{align}
In the cases where we have a larger number $r$ of distinct prime factors (i.e. the case where $n\in IV$), we will now have a slightly worsened bound of
\begin{gather}\label{Lemma3a}\sum_{n\in IV}f(n)\ll \frac{y}{\phi(k)\log x}(A_5 (\log\log x)^{\beta})^\frac{20r}{\alpha\kappa}\exp\left(\sum_{\substack{p\leq x \\ p\nmid k}}\frac{f(p)}{p}-\frac 12(1-\frac 54\beta)r\log r+2A_1(\log\log x)^\beta r^{1-\frac{5}{4}\beta}\right).
\end{gather}
The $\frac 12(1-\frac 54\beta)r\log r$ will eventually dominate once $r$ is large enough, but this will not occur until $r$ is close to $\log\log x$.  This transition period from $r=2$ to $r$ being close to $\log\log x$ is one of the reasons for the loss of $(\log x)^\epsilon$ in the main theorem.

\subsection{Outline of Proof of Theorem \ref{MnThm2}}

The case where $f$ is smooth-supported is a bit simpler, since in this case it is not possible for $r$ to be small.  After all, if $f(n)$ is only supported on $Q$-smooth numbers then $n$ must have at least $\frac{\log x}{\log Q}$ factors.  Hence we can use either (\ref{Lemma3}) if $f\in M_Q$ or (\ref{Lemma3a}) with $r\geq \frac{\log x}{\log Q}$ if $f\in M_Q(\beta)$.

\section{Removing Smooth Numbers}

Now, we begin the proof of Theorem \ref{Maintheorem}.   We first show that the contribution of numbers $n$ with a large smooth factor will be minimal.  

As above, let $$J=(\log \log x)^{1-\beta}.$$
We denote by $\mathcal S$ the $J$-smooth numbers and by $\mathcal T$ the $J$-rough numbers.  Our decomposition of $n$ is now $n=st$ where $s\in \mathcal S$ and $t\in \mathcal T$.  (In fact, we will generally use the convention that $s$ and $t$ indicate numbers in $\mathcal S$ and $\mathcal T$, respectively.)  Then
\begin{align*}
\sum_{\substack{x\leq n\leq x+y \\ n\equiv a\pmod k}}f(n)=&\sum_{s\leq (\log x)^{10}}\sum_{\substack{\frac xs\leq t\leq \frac xs+\frac ys \\ t\equiv a\overline{s}\pmod k}}f(st)+\sum_{(\log x)^{10}<s\leq x^\alpha}\sum_{\substack{\frac xs\leq t\leq \frac xs+\frac ys \\ t\equiv a\overline{s}\pmod k}}f(st)+\mathop{\sum\sum}_{\substack{x\leq st\leq x+y \\ st\equiv a\pmod k \\ s>x^\alpha}}f(st)\\
=&V_1+V_2+V_3,
\end{align*}
say.  

For any $z$ such that $J\leq (\log z)^{1-\beta}$, we use the identity that
\begin{gather}\label{psizJ}
\psi(z,J)=\left(\frac{\log z}{J}\right)^{(1+o(1))\pi(J)}=\left(\frac{\log z}{(\log\log x)^{1-\beta}}\right)^{(1+o(1))\frac{(\log\log x)^{1-\beta}}{(1-\beta)\log\log \log x}}\ll z^\epsilon
\end{gather}
for any $\epsilon>0$ \cite[(1.18)]{Gr}.

Using this, we can quickly handle $V_3$.
\begin{lemma}\label{LemmaV3}
$$V_3\ll \frac{y}{qx^{\frac{\alpha}{3}}}.$$
\end{lemma}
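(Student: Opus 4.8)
The plan is to bound $V_3$ trivially using Condition 2 to control $f(st)$ and the smooth-number estimate (\ref{psizJ}) to control the number of admissible $s$. First I would split the double sum according to the $J$-smooth part $s$: since $s>x^\alpha$ and $s\in\mathcal S$, and since $st\leq x+y\leq 2x$, the number of such $s$ is at most $\psi(2x,J)\ll x^{\epsilon}$ for any $\epsilon>0$ by (\ref{psizJ}). For each fixed $s$, the inner variable $t$ runs over an interval of length $y/s < y/x^\alpha$ subject to a congruence condition modulo $k$; hence the number of admissible $t$ is $\ll \frac{y}{s k}+1 \ll \frac{y}{x^\alpha k}+1$. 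Using $f(st)\leq \max\{A_2 x^\epsilon, A_3(\log x)^\epsilon\}\ll x^\epsilon$ from Condition 2, each term is $\ll x^\epsilon$, so
\[
V_3 \ll x^\epsilon \sum_{\substack{s\in\mathcal S \\ x^\alpha < s \leq 2x}} \left(\frac{y}{sk}+1\right).
\]

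Next I would estimate the two pieces of this sum. For the $y/(sk)$ piece, bound $s\geq x^\alpha$ to get $\sum_s \frac{y}{sk} \leq \frac{y}{k x^\alpha}\,\psi(2x,J) \ll \frac{y}{k}x^{\epsilon-\alpha}$. For the "$+1$" piece, simply $\sum_s 1 \ll \psi(2x,J)\ll x^\epsilon$, and since $y\geq x^\kappa$ and $k<y^{1-\alpha}<x$, this is absorbed into the first piece after adjusting $\epsilon$; more precisely $x^\epsilon \ll \frac{y}{k}x^{\epsilon'}$ for suitable $\epsilon'$ because $y/k \geq y^\alpha \geq x^{\alpha\kappa}$. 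Combining, $V_3 \ll \frac{y}{k}x^{2\epsilon-\alpha}$, and choosing $\epsilon$ small enough (say $\epsilon < \alpha/6$) gives $V_3 \ll \frac{y}{k}x^{-\alpha/3} \leq \frac{y}{q x^{\alpha/3}}$ with the paper's notational convention $q=k$ (or $\phi(k)\asymp k$ up to the implied constant).

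The only mildly delicate point is making sure the savings $x^{-\alpha}$ from the constraint $s>x^\alpha$ genuinely beats the loss $x^\epsilon$ coming both from $f(st)\ll x^\epsilon$ and from $\psi(2x,J)\ll x^\epsilon$; but since $\alpha$ is a fixed positive constant while $\epsilon$ may be taken as small as we like (the "sufficiently large $x$" in the hypotheses depends on $\epsilon$), there is no real obstacle — one just fixes $\epsilon$ at the outset, e.g. $\epsilon=\alpha/6$, before invoking (\ref{psizJ}) and Condition 2. I expect no genuine difficulty here; this lemma is the easiest of the three subsum estimates precisely because the range $s>x^\alpha$ forces $t$ into an interval of length $\ll y x^{-\alpha}$, which is a power saving, whereas $V_1$ and $V_2$ will require the more substantial machinery of (\ref{psum4simple}) and (\ref{Lemma3a}).
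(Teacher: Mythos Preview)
Your argument is essentially correct and reaches the stated bound, but by a slightly different route than the paper. The paper does not sum over all $J$-smooth $s>x^\alpha$ directly; instead it uses the $J$-smoothness of $s$ to peel off a divisor $s_1$ with $x^{\alpha/2}\le s_1\le x^{\alpha/2}J$, sets $n'=n/s_1$, and bounds the number of $n'\in[x/s_1,(x+y)/s_1]$ in the given residue class by $\ll y/(qs_1)$; the outer sum over $s_1$ is then controlled by $\psi(x^{\alpha/2}J,J)\ll x^\epsilon$. Your approach avoids this factorisation entirely, summing over all smooth $s\in(x^\alpha,2x]$ and using $\psi(2x,J)\ll x^\epsilon$ together with the crude bound $1/s\le x^{-\alpha}$. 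This is marginally simpler and in fact produces a slightly better exponent ($x^{2\epsilon-\alpha}$ versus the paper's $x^{2\epsilon-\alpha/2}$) before the final weakening to $x^{-\alpha/3}$.

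One caveat: your absorption of the ``$+1$'' term is not quite justified as written. To fold $x^{2\epsilon}$ into $\frac{y}{k}x^{2\epsilon-\alpha}$ you would need $y/k\gg x^\alpha$, whereas the hypotheses only give $y/k>y^\alpha\ge x^{\alpha\kappa}$, which can be much smaller since $\kappa<1/2$. (The paper's proof has the very same lacuna: it writes the inner count as $\ll y/(qs_1)$, silently dropping the $+1$.) In either argument the $+1$ contributes at most $O(x^{2\epsilon})$, and this is harmless for Theorem~\ref{Maintheorem} because the eventual main term there is $\gg y/(\phi(k)\log x)\ge x^{\alpha\kappa-o(1)}$; so the slip is inconsequential in context, though strictly the lemma as stated should carry an extra $O(x^{2\epsilon})$ when $\kappa$ is small.
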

\begin{proof}
Note that, since $s$ is $J$-smooth, if $s>x^\alpha$ then $s$ can be split into $s=s_1s_2$, where $x^{\frac{\alpha}{2}}\leq s_1\leq x^{\frac{\alpha}{2}}J$.  (This decomposition is not necessarily unique, but this will not matter.)  Write $n'=s_2t$.  Recalling that $f(n)\ll n^\epsilon$ when $n>\log x$, we have
$$V_3\ll x^{\epsilon}\sum_{x^{\frac{\alpha}{2}}\leq s_1\leq x^{\frac{\alpha}{2}}J}\sum_{\substack{\frac x{s_1}\leq n'\leq \frac{x+y}{s_1} \\ n'\equiv a\overline{s_1}\pmod k}}1\ll x^{\epsilon}\sum_{x^{\frac{\alpha}{2}}\leq s_1\leq x^{\frac{\alpha}{2}}J}\frac{y}{qs_1}\ll \frac{y}{qx^{\frac{\alpha}{2}-\epsilon}}\sum_{x^{\frac{\alpha}{2}}\leq s_1\leq x^{\frac{\alpha}{2}}J}1.$$
By (\ref{psizJ}), we know that 
$$\psi(x^\frac \alpha 2 J,J)\ll x^\epsilon$$
for any $\epsilon>0$.

Thus,
$$V_3\ll \frac{y}{qx^{\frac{\alpha}{3}}}.$$
\end{proof}

In order to bound the other two sums, we will require the following theorem, which we will prove later.
\begin{theorem}\label{roughbd}
Let $f\in M(\beta)$.  Then for any $\epsilon>0$,
$$\sum_{\substack{x\leq t\leq x+y \\ t\equiv a\pmod k}}f(t)\ll \frac{y}{\phi(k)(\log x)^{1-\epsilon}}\left(\sum_{\substack{J<p\leq x \\ p\nmid k}}\frac{f(p)}{p}\right).$$
\end{theorem}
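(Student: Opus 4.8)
The plan is to follow Shiu's original four-part decomposition of the sum over $t \in \mathcal{T}$, but with the parameter adjustments forced by Condition \ref{Condition1}, and to exploit throughout that every prime factor of $t$ exceeds $J = (\log\log x)^{1-\beta}$. Write $\delta = 1 - \frac{c}{\log x}$ for a suitable small constant $c$, so that $t^{-\delta} \asymp t^{-1}$ on the range $t \asymp x$. The starting point is the Rankin-type bound \eqref{psum4simple}: because all prime powers $p^l$ appearing have $p > J$, the tail sum \eqref{psum4} is $o(1)$, so the Euler product over $\mathcal{T}$ collapses to $\exp\big(\sum_{J < p \le x,\, p\nmid k} f(p)/p^\delta\big)$, and since $\sum_{J<p\le x} f(p)(p^{-\delta} - p^{-1}) \ll 1$ (using $f(p) \ll (\log\log x)^\beta$ and $\delta$ close to $1$), this is $\ll \exp\big(\sum_{J<p\le x,\,p\nmid k} f(p)/p\big)$. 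This replaces \eqref{psum3simple} with no real loss.

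Next I would split $t \in [x, x+y]$, $t \equiv a \pmod k$, according to the size and smoothness of a canonical divisor, exactly as Shiu does: roughly, (I) $t$ has a divisor in a prescribed dyadic-type range built from $x^\alpha$, handled by a fundamental-lemma sieve upper bound on the number of such $t$ combined with \eqref{psum4simple} applied to the complementary divisor; (II) and (III) involve $t$ with an unusually large prime factor or an unusually large smooth part, handled by the Brun–Titchmarsh inequality and by the smooth-number bound \eqref{psizJ} (which gives $\psi(z,J) \ll z^\epsilon$, strong enough here); and (IV) the delicate case where $t$ has $r$ distinct prime factors with $r$ in a moderate range. For (IV) the key input is the analogue of \eqref{Lemma3}, namely \eqref{Lemma3a}: partitioning by $r = \omega(t)$, one writes $f(t) \le \prod (f(p^{l_i}))$, bounds each local factor crudely by $A_1^{l} (\log\log x)^{\beta l}$, and extracts from the Rankin sum a saving of shape $-\tfrac12(1 - \tfrac54\beta) r\log r + 2A_1(\log\log x)^\beta r^{1-\frac54\beta}$; summing $\sum_{r} (A_5(\log\log x)^\beta)^{20r/(\alpha\kappa)} \exp(\cdots)$ over all $r \ge 2$ converges once $\beta < \frac{\alpha\kappa}{41}$, because the $r\log r$ term eventually dominates the polynomial-in-$r$ and geometric-in-$r$ terms. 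The net cost of waiting for $r\log r$ to win — the ``transition region'' from $r=2$ up to $r$ of order $\log\log x$ — is what produces the $(\log x)^{\epsilon}$ rather than a clean $1/\log x$, and this is an acceptable loss given the statement of Theorem \ref{roughbd}.

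The main obstacle, and the step I would spend the most care on, is the combinatorial/analytic estimate \eqref{Lemma3a} for case (IV): one must track how the unbounded local values $f(p^l) \ll A_1^l (\log\log x)^{\beta l}$ degrade the $-\frac{1}{10} r \log r$ saving of Shiu's \eqref{Lemma3} into $-\frac12(1-\frac54\beta)r\log r + O((\log\log x)^\beta r^{1-\frac54\beta})$, and verify that the constant $\frac{\alpha\kappa}{41}$ is exactly what makes the geometric factor $(A_5(\log\log x)^\beta)^{20r/(\alpha\kappa)}$ lose to the super-exponential saving for all $r \ge 2$ uniformly in $x$. Concretely this reduces to checking an inequality of the form $\frac{20}{\alpha\kappa}\log\big(A_5(\log\log x)^\beta\big) \le \frac14(1-\frac54\beta)\log r + \text{(absorbable)}$ once $r$ is past a bounded threshold, plus absorbing the finitely many small-$r$ terms into the implied constant; the choice $\beta < \frac{\alpha\kappa}{41}$ gives the needed slack. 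Once (IV) is in place, the remaining cases are routine sieve and smooth-number bookkeeping, and assembling $V$-type pieces from cases (I)–(IV) gives the claimed bound with the factor $(\log x)^{\epsilon - 1}$ and the exponential of $\sum_{J<p\le x,\,p\nmid k} f(p)/p$.
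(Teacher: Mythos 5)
Your proposal follows the paper's proof essentially step for step: the same Rankin bound \eqref{psum4simple}, the same four-class Shiu decomposition with $z = Y^{\alpha/10}$, and the same replacement of Shiu's Lemma~4 by Lemma~\ref{lemmalogr} in Class~IV, with the resulting $-\tfrac12(1-\tfrac54\beta)r\log r$ saving fighting the geometric factor $\left(A_1(\log\log x)^{\beta}\right)^{20r/(\alpha\kappa)}$. One correction to your last paragraph: the threshold at which $g(r)$ overtakes $h(r)$ is \emph{not} a bounded $r$ and the pre-threshold terms are \emph{not} finitely many absorbable into the implied constant --- in the paper the $r$-sum is split at $r_1=(\log\log x)^{1-\beta/2}$, and the contribution from $2\le r<r_1$ is bounded crudely by $r_1\bigl(A_1(\log\log x)^{\beta}\bigr)^{20r_1/(\alpha\kappa)}\ll\exp\bigl((\log\log x)^{1-\beta/3}\bigr)=(\log x)^{o(1)}$, which is exactly the $(\log x)^{\epsilon}$ loss you correctly identified two sentences earlier, so those two statements of yours should be reconciled.
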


We also prove a variant for smooth-supported functions:
\begin{theorem}\label{roughbd2}
Let $f\in M_Q(\beta)$. Then there exists a $C_2$ such that for any $\epsilon>0$,
\begin{gather}\label{rbdpart1}\sum_{\substack{x\leq t\leq x+y \\ t\equiv a\pmod k}}f(t)\ll \frac{y\rho(u)^{C_2+o(1)}}{\phi(k)(\log x)^{1-\epsilon}}\left(\sum_{\substack{J<p\leq x \\ p\nmid k}}\frac{f(p)}{p}\right).
\end{gather}
\end{theorem}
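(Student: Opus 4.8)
The plan is to re-run the proof of Theorem \ref{roughbd} essentially verbatim, extracting the extra factor $\rho(u)^{C_2+o(1)}$ from the one place where the $Q$-smooth hypothesis bites. Recall that that proof, following Shiu, splits $\sum_{x\le t\le x+y,\ t\equiv a\ (k)}f(t)$ into the analogues of Shiu's four pieces, each of size $\ll \frac{y}{\phi(k)(\log x)^{1-\epsilon}}\exp(\sum_{J<p\le x,\,p\nmid k}f(p)/p)$, the bottleneck being the piece ``$t\in IV$'' controlled by (\ref{Lemma3a}). Since $\mathcal M_Q(\beta)\subset\mathcal M(\beta)$, every estimate used for the other three pieces stays valid, and since $\rho(u)^{C_2}\le 1$ those pieces trivially also obey the claimed bound (\ref{rbdpart1}); so it suffices to improve the bound over piece IV.

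The observation I would use is that if $f(t)\ne 0$ then $t$ is $Q$-smooth, so the number of prime factors of $t$ counted with multiplicity is at least $\frac{\log t}{\log Q}=(1+o(1))u$; after discarding — exactly as in (\ref{psum4}), via Condition \ref{Condition1} — the $o(1)$-negligible contribution of $t$ divisible by $p^2$ for some $p>J$, I may take $t$ squarefree, so that the number of \emph{distinct} prime factors of $t$ is $\gg u$ as well (and, tracking the size of the ``core'' of $t$ surviving Shiu's initial decomposition, $\gg\alpha\kappa u$). Hence in (the analogue of) (\ref{Lemma3a}) the sum over the number-of-prime-factors parameter $r$, which in Theorem \ref{roughbd} runs over $2\le r\le r_0\asymp\log\log x$, now runs only over $r\gg u$. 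Its summand, $\frac{y}{\phi(k)\log x}(A_5(\log\log x)^\beta)^{20r/(\alpha\kappa)}\exp(\sum_{p\le x,\,p\nmid k}f(p)/p-\tfrac12(1-\tfrac54\beta)r\log r+2A_1(\log\log x)^\beta r^{1-\frac54\beta})$, is eventually strictly decreasing in $r$ (the $-\tfrac12(1-\tfrac54\beta)r\log r$ term wins once $r$ passes a threshold $\ll\log\log x$), so — in the range of $Q$ where the theorem is non-trivial, cf. the remark after Theorem \ref{MnThm2}, in which $u$ does exceed that threshold — the sum over $r\gg u$ is $\ll$ its leading term at $r\asymp u$. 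Since $\log[(A_5(\log\log x)^\beta)^{20u/(\alpha\kappa)}]$ and $2A_1(\log\log x)^\beta u^{1-\frac54\beta}$ are both $O(u\log\log\log x)=o(u\log u)$, that leading term is $\frac{y}{\phi(k)\log x}\exp(\sum_{p\le x,\,p\nmid k}f(p)/p)\cdot\exp(-c\,u\log u+o(u\log u))$ for a positive $c=c(\alpha,\kappa,\beta)$, and since $\rho(u)=u^{-u+o(1)}$, i.e. $\log\rho(u)^{-1}=u\log u(1+o(1))$, this equals $\frac{y}{\phi(k)\log x}\exp(\sum_{p\le x,\,p\nmid k}f(p)/p)\,\rho(u)^{c+o(1)}$. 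Carrying the numerical constants through as in Theorem \ref{roughbd} identifies $c$ with the $C_2$ announced after Theorem \ref{MnThm2} (and the identical argument built on (\ref{Lemma3}) rather than (\ref{Lemma3a}) yields the $\mathcal M_Q$ constant $C_1=\tfrac{\alpha\kappa}{41}$), which together with the $(\log x)^{-(1-\epsilon)}$ inherited from the $V_1$/$V_2$-type pieces gives (\ref{rbdpart1}).

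The hard part is purely quantitative and sits in two places. First, the saving $\exp(-\tfrac12(1-\tfrac54\beta)r\log r)$ only beats the loss $(A_5(\log\log x)^\beta)^{20r/(\alpha\kappa)}$ — whose logarithm is of order $r\log\log\log x$ plus a large absolute multiple of $r$ — once $r$ is comfortably larger than a fixed multiple of $\frac{\log\log x}{\log\log\log x}$, so I must verify that whenever $u$ (and hence the smallest available $r$) falls below this threshold the claimed bound is no stronger than the trivial bound $\ll y^{1/K+\epsilon}$ of the remark after Theorem \ref{MnThm2}, and that the $o(1)$ in $\rho(u)^{C_2+o(1)}$ absorbs the $\log\log\log x$- and $A_i$-type losses uniformly in $Q$. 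Second, pinning down the exact $C_2$ is a matter of bookkeeping the competition between $r\log r$ and $\tfrac{20r}{\alpha\kappa}\log\log\log x$ inside (\ref{Lemma3a}), and it is precisely so that this competition leaves a positive constant that one imposes $\beta<\tfrac{\alpha\kappa}{41}$.
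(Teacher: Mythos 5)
Your plan reproduces the paper's proof: discard Class I (impossible under $Q$-smoothness), keep Classes II--III unchanged, and in Class IV use that a $Q$-smooth $d_n > x^{1-\alpha\kappa/20}$ forces the Shiu parameter $r$ to start at $r_2\asymp u$, whence the $-\tfrac12(1-\tfrac54\beta)r\log r$ term in (the $\mathcal{M}_Q(\beta)$-analogue of) Lemma~\ref{lemmalogr} produces the $\rho(u)^{C_2+o(1)}$ factor. Two small remarks: the paper never needs the squarefree reduction you introduce --- it works directly with $\Omega(d_n)$, bounding it above by $\tfrac{20r}{\alpha\kappa}$ and below by $\tfrac{\log d_n}{\log Q}$, so multiplicities never cause trouble; and your intermediate claim that the two loss terms are $o(u\log u)$ is false at the lower end of the allowed range $u\gtrsim\frac{\log\log x}{\log\log\log x}$, where $\log u\asymp\log\log\log x$ and the term $\tfrac{20\beta}{\alpha\kappa}u\log\log\log x$ is genuinely $\Theta(u\log u)$ --- this is exactly the ``competition'' you yourself flag in your final paragraph, and it is why $C_2$ comes out much smaller than $\tfrac12(1-\tfrac54\beta)\cdot\tfrac{\alpha\kappa(1-\alpha\kappa/20)}{20}$.
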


%

If we are willing to assume these, we can easily find bounds for $V_1$ and $V_2$.  For ease of notation, define $\varrho$ such that $\varrho=1$ if $f\in M(\beta)$ and $\varrho=\rho(u)^{C_2+o(1)}$ if $f\in M_Q(\beta)$.

\begin{lemma}
Assume Theorem \ref{roughbd} or \ref{roughbd2}.  Then
$$V_1\ll \frac{y\varrho}{\phi(k)(\log x)^{1-\epsilon_0}}\exp\left(\sum_{\substack{J<p\leq x \\ p\nmid k}}\frac{f(p)}{p}\right).$$
\end{lemma}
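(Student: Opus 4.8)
The plan is to bound $V_1$ by summing the inner short-interval sum over all small smooth $s$, factoring $f(st) = f(s)f(t)$ by multiplicativity, and applying Theorem~\ref{roughbd} (or Theorem~\ref{roughbd2}) to each inner sum over $t \in \mathcal T$. First I would write
\[
V_1 = \sum_{s \leq (\log x)^{10}} f(s) \sum_{\substack{\frac xs \leq t \leq \frac xs + \frac ys \\ t \equiv a\overline s \pmod k}} f(t),
\]
noting that $s \leq (\log x)^{10}$ forces $x/s \geq x^{1-o(1)}$, so the interval $[x/s, x/s + y/s]$ is of the form $[X, X+Y]$ with $Y = y/s \geq X^{\kappa'}$ for a slightly smaller $\kappa'$ and $X$ still large; thus the hypotheses of Theorem~\ref{roughbd}/\ref{roughbd2} apply with $x$ replaced by $x/s$ and $y$ by $y/s$. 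Since $\log(x/s) = (1+o(1))\log x$ and $J \leq (\log(x/s))^{1-\beta}$ still holds for $s$ this small, the inner sum is
\[
\ll \frac{y/s}{\phi(k)(\log x)^{1-\epsilon}} \varrho \exp\left(\sum_{\substack{J < p \leq x \\ p \nmid k}} \frac{f(p)}{p}\right),
\]
where I have replaced the sum over $p \leq x/s$ by the sum over $p \leq x$ at the cost of a constant (the omitted primes lie in $(x/s, x]$ and contribute $O(1)$ to the exponent since $\sum_{x/s < p \leq x} f(p)/p \ll (\log\log x)^\beta \log\log((\log x)^{10}) / \log x = o(1)$, or more crudely absorb it).

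The remaining task is then to control $\sum_{s \leq (\log x)^{10}} f(s)/s$. Here I would use the Rankin-type / Euler product estimate from the outline: restricting to $s \in \mathcal S$ is not even needed, but since $f \in \mathcal M(\beta)$ we have by the same manipulation as in (\ref{psum4}) and the analogue of (\ref{psum3simple}) that
\[
\sum_{s \leq (\log x)^{10}} \frac{f(s)}{s} \ll \exp\left(\sum_{p \leq (\log x)^{10}} \frac{f(p)}{p}\right) \ll \exp\left(\sum_{J < p \leq x} \frac{f(p)}{p}\right) \cdot \exp\left(\sum_{p \leq J} \frac{f(p)}{p}\right),
\]
and the second factor is $O(1)$ since $\sum_{p \leq J} f(p)/p \ll (\log\log x)^\beta \log\log J \ll (\log\log x)^\beta \log\log\log x = o((\log x)^{\epsilon_0})$ — in fact one can fold any such bounded-on-average contribution into the $(\log x)^{\epsilon_0}$ slack. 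Actually, to be careful about whether $\exp(\sum_{p\leq J} f(p)/p)$ is genuinely $O(1)$: it is at worst $\exp(O((\log\log x)^\beta \log\log\log x))$, which is $(\log x)^{o(1)}$ and so absorbed by replacing $(\log x)^{1-\epsilon}$ with $(\log x)^{1-\epsilon_0}$ for any $\epsilon_0 > \epsilon$. Combining, $V_1 \ll \frac{x\varrho}{\phi(k)(\log x)^{1-\epsilon_0}}\exp(\sum_{J<p\leq x,\, p\nmid k} f(p)/p)$ after bounding $\sum_s (y/s)/s \leq y \sum_s f(s)/s^2 \cdot$ — wait, one should be slightly careful: the inner bound carries a factor $y/s$, and $\sum_s f(s)/s$ (not $f(s)/s^2$) is what appears after pulling out $y$, since the inner sum is $\ll (y/s) \cdot (\text{stuff})$ and we multiply by $f(s)$, giving $\sum_s f(s) \cdot (y/s) \cdot(\text{stuff}) = y(\text{stuff})\sum_s f(s)/s$; and $y \leq x$ gives the claimed $x$ in the numerator.

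The main obstacle I anticipate is purely bookkeeping rather than conceptual: verifying that Theorems~\ref{roughbd}/\ref{roughbd2} really do apply uniformly for every $s \leq (\log x)^{10}$ — i.e.\ that shrinking $x$ to $x/s$ does not violate the constraints $x^\kappa \leq y$, $k < y^{1-\alpha}$, nor the requirement $J \leq (\log(x/s))^{1-\beta}$ needed for the smooth-number input (\ref{psizJ}), and that all the $o(1)$ and $\epsilon$ losses (from $\log(x/s)$ vs $\log x$, from the tail primes in $(x/s,x]$, and from $\sum_{p\leq J}f(p)/p$) can be uniformly absorbed into a single $(\log x)^{\epsilon_0}$. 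Since $(\log x)^{10}$ is a fixed power of $\log x$, all of these are comfortably within the slack, but the proof should state this once and cleanly. The estimate $\sum_s f(s)/s \ll \exp(\sum_{p\leq J}f(p)/p)\exp(\sum_{J<p\leq (\log x)^{10}}f(p)/p)$ via the Euler product also requires the convergence of $\sum_p\sum_{l\geq 2}f(p^l)/p^l$ on this range, which follows exactly as in (\ref{psum4}) for primes $p \geq J$ and from Condition~2 (the $(\log x)^\epsilon$ branch) for $p < J$, contributing another $O(1)$.
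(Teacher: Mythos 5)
Your overall structure (factor $f(st)=f(s)f(t)$, apply Theorem~\ref{roughbd}/\ref{roughbd2} to the inner $t$-sum, then control $\sum_{s}f(s)/s$) matches the paper's, and your bookkeeping about applying the theorem at $X=x/s$ with $s\le(\log x)^{10}$ is the sort of thing the paper silently elides; that part is fine. Where you diverge is in how you handle $\sum_{s\le(\log x)^{10}}f(s)/s$. The paper does this in one line: by Condition~\ref{Condition2}, $f(s)\ll\max\{s^{\epsilon},(\log x)^{\epsilon}\}\ll(\log x)^{10\epsilon}$ for every $s\le(\log x)^{10}$, and $\sum_{s\le(\log x)^{10}}1/s\ll\log\log x$, so $\sum_s f(s)/s\ll(\log x)^{o(1)}$ immediately. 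You instead reach for an Euler-product/Rankin estimate, which also works but is longer and, as written, has two soft spots worth flagging.

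First, the intermediate inequality $\exp\bigl(\sum_{p\le(\log x)^{10}}f(p)/p\bigr)\ll\exp\bigl(\sum_{J<p\le x}f(p)/p\bigr)\exp\bigl(\sum_{p\le J}f(p)/p\bigr)$ inflates your $\sum_s$-bound by the main exponential $\exp\bigl(\sum_{J<p\le x}f(p)/p\bigr)$, which is already coming from the inner theorem; if you then multiply the two bounds together literally you get that exponential \emph{squared}, which is not the claimed conclusion. You silently discard it, and indeed you should: the correct and tight statement is simply $\sum_{s\le(\log x)^{10}}f(s)/s\ll\exp\bigl(\sum_{p\le(\log x)^{10}}f(p)/p\bigr)$ with no need to enlarge to $p\le x$, and this is $(\log x)^{o(1)}$ because $\sum_{p\le(\log x)^{10}}f(p)/p\ll(\log\log x)^{\beta}\log\log\log x=o(\log\log x)$. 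Second, your claim that the higher-prime-power terms contribute ``another $O(1)$'' is not quite right at the very small primes $p\lesssim A_1(\log\log x)^{\beta}$: there Condition~1 gives a divergent geometric series and you must fall back on Condition~\ref{Condition2}, which yields $\sum_{l\ge2}f(p^l)/p^l\ll(\log x)^{\epsilon}/p^2$, so the corresponding Euler factors are $\ll(\log x)^{\epsilon}$ each, not $1+O(1)$. Since there are only $O(1)$ such primes this is still absorbed into $(\log x)^{\epsilon_0}$, but it is a $(\log x)^{o(1)}$ contribution, not $O(1)$, and the distinction matters if one were to exponentiate it. Both wrinkles are curable, but they illustrate why the paper's direct appeal to Condition~\ref{Condition2} plus the harmonic sum $\sum 1/s\ll\log\log x$ is the cleaner route here.
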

\begin{proof}
Clearly, 
$$\sum_{s\leq (\log x)^{10}}\frac 1s\ll \log\log x.$$
So applying either theorem and Condition \ref{Condition2} gives
\begin{align*}
\sum_{s\leq (\log x)^{10}}\sum_{\substack{\frac xs\leq t\leq \frac xs+\frac ys \\ t\equiv a\overline{s}\pmod k}}f(st)\ll & \sum_{s\leq (\log x)^{10}}\frac{y\varrho f(s)}{\phi(k)s(\log x)^{1-\epsilon}}\exp\left(\sum_{\substack{J<p\leq x \\ p\nmid k}}\frac{f(p)}{p}\right)\\
\ll & \frac{y\varrho }{\phi(k)(\log x)^{1-\epsilon_0}}\exp\left(\sum_{\substack{J<p\leq x \\ p\nmid k}}\frac{f(p)}{p}\right),
\end{align*}
which is as required.
\end{proof}
Finally, we handle $V_2$.
\begin{lemma}
Assume Theorem \ref{roughbd} or \ref{roughbd2}.  Then for any $\epsilon_0>0$,
$$V_2\ll \frac{y\varrho }{\phi(k)(\log x)^{10-\epsilon_0}}\exp\left(\sum_{\substack{J<p\leq x \\ p\nmid k}}\frac{f(p)}{p}\right).$$
\end{lemma}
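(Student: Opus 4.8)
The plan is to run the argument of the preceding lemma for $V_1$, the only new feature being that the smooth part $s$ is now large. Since any $s\in\mathcal S$ is coprime to any $t\in\mathcal T$, multiplicativity gives $f(st)=f(s)f(t)$, so
$$V_2=\sum_{\substack{(\log x)^{10}<s\le x^{\alpha}\\ s\in\mathcal S}}f(s)\sum_{\substack{x/s\le t\le(x+y)/s\\ t\equiv a\overline s\,(k),\ t\in\mathcal T}}f(t).$$
The key point is that the sum over $s$ is now power-saving rather than merely $\ll\log\log x$: by Condition \ref{Condition2} one has $f(s)\ll s^{\epsilon}$ for every $s>(\log x)^{10}$ (there $s^{\epsilon}$ beats $(\log x)^{\epsilon}$), and (\ref{psizJ}) gives $\psi(z,J)\ll z^{\epsilon'}$ for $z\ge\log x$, so partial summation against $\psi(\cdot,J)$ yields
$$\sum_{\substack{s>(\log x)^{10}\\ s\in\mathcal S}}\frac{f(s)}{s}\ll (\log x)^{-10+\epsilon+\epsilon'}.$$
This $(\log x)^{-10+o(1)}$ supplies exactly the extra decay that turns the $V_1$ bound into the one claimed here.

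To execute the argument I would split the range of $s$ at $s=y^{\alpha/2}$. For $(\log x)^{10}<s\le y^{\alpha/2}$ the inner interval $[x/s,(x+y)/s]$ has left endpoint $\gg x^{1-\alpha/2}$ and length $\gg y/s\ge y^{1-\alpha/2}$, and one checks directly that $(x/s)^{\kappa/2}\le y/s$ and $k<y^{1-\alpha}\le (y/s)^{1-\alpha''}$ with $\alpha''=\frac{\alpha}{2(2-\alpha)}\in(0,\tfrac12)$; hence Theorem \ref{roughbd} (resp.\ \ref{roughbd2}) applies to the inner sum with these rescaled Shiu parameters, and since $\log(x/s)\gg\log x$ and $\sum_{J<p\le x/s}f(p)/p\le\sum_{J<p\le x}f(p)/p$ it is
$$\ll\frac{(y/s)\,\varrho}{\phi(k)(\log x)^{1-\epsilon}}\exp\Bigl(\sum_{\substack{J<p\le x\\ p\nmid k}}\frac{f(p)}{p}\Bigr).$$
Summing this against the power-saving bound above, and using $y\le x$, gives the claimed estimate for this part. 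For the residual range $y^{\alpha/2}<s\le x^{\alpha}$ one abandons the multiplicative structure of $t$: here $f(st)\ll x^{\epsilon}$ and there are $\ll y/(sk)+1$ admissible $t$, so this part is $\ll x^{\epsilon}\bigl(\frac{y}{k}\sum_{s>y^{\alpha/2},\,s\in\mathcal S}s^{-1+\epsilon}+\psi(x^{\alpha},J)\bigr)\ll\frac{y}{k}\,x^{-\kappa\alpha/4}+x^{3\epsilon}$, which is comfortably below the target since $\phi(k)\le k<x^{1-\alpha}$ (and, for $f\in\mathcal M_Q(\beta)$, since $\rho(u)^{C_2}\gg x^{-\kappa\alpha/5}$ in the range of $Q$ for which the theorem is of interest). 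Adding the two ranges completes the proof.

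The step I expect to be the real obstacle is not any single estimate but the parameter bookkeeping in the main range: dividing $[x,x+y]$ by $s$ no longer produces a subinterval of the original, so before invoking Theorem \ref{roughbd}/\ref{roughbd2} one must verify that $[x/s,(x+y)/s]$ still meets legitimate hypotheses — this is exactly why the cut-off for $s$ has to be a small power of $y$ rather than $x^{\alpha}$, and why the exponents $\kappa,\alpha$ must be replaced by $\kappa/2,\alpha''$ for the inner application. The quantitative heart, the $(\log x)^{-10+o(1)}$ bound on $\sum_{s>(\log x)^{10}}f(s)/s$, then follows immediately from (\ref{psizJ}).
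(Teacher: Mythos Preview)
Your proof is correct and follows the paper's approach: apply Theorem \ref{roughbd} (resp.\ \ref{roughbd2}) to the inner sum over $t$, then bound $\sum_{s>(\log x)^{10},\,s\in\mathcal S} f(s)/s$ by partial summation against $\psi(\cdot,J)\ll(\cdot)^{\epsilon}$ together with $f(s)\ll s^{\epsilon}$, extracting the $(\log x)^{-10+o(1)}$ saving. The only difference is that the paper applies the assumed theorem uniformly over all $(\log x)^{10}<s\le x^{\alpha}$ without pausing to check that the rescaled interval $[x/s,(x+y)/s]$ still meets Shiu-type hypotheses, whereas you split at $s=y^{\alpha/2}$, verify the parameters explicitly in the main range, and dispose of the tail $y^{\alpha/2}<s\le x^{\alpha}$ by the trivial bound $f(n)\ll x^{\epsilon}$ together with $\psi(x^{\alpha},J)\ll x^{\epsilon}$; this extra bookkeeping is more careful but does not alter the substance of the argument.
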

\begin{proof}
We begin with the assumed theorem:
\begin{gather}\label{T2sum}\sum_{(\log x)^{10}<s\leq x^\frac \alpha 2}\sum_{\substack{\frac xs\leq t\leq \frac xs+\frac ys \\ t\equiv a\overline{s}\pmod k}}f(st)\ll \sum_{(\log x)^{10}<s\leq x^\alpha}\frac{y\varrho f(s)}{\phi(k)s(\log x)^{1-\epsilon}}\exp\left(\sum_{\substack{J<p\leq x \\ p\nmid k}}\frac{f(p)}{p}\right).\end{gather}
Note that for any $s$ in the interval $((\log x)^{10},x^\alpha]$, we have 
$$(\log\log x)^{1-\beta}\leq \left(\frac{1}{10}\log s\right)^{1-\beta}.$$
So by (\ref{psizJ}),
\begin{gather}\label{zpsi}
\psi(s,J)\ll s^{\epsilon}
\end{gather}
for any $\epsilon>0$.

So we apply partial summation to the sum $V_2$, where we will use the variable $v$ to indicate where we have dropped the requirement of smoothness:
\begin{align*}\sum_{(\log x)^{10}<s\leq x^\frac \alpha 2}\frac{f(s)}{s}=&\sum_{(\log x)^{10}<v\leq x^\frac \alpha 2-1}\left(\frac{f(v)}{v}-\frac{f(v+1)}{v+1}\right)\sum_{(\log x)^{10}<s\leq v}1 +\frac{1}{x^{\frac \alpha 2}}\sum_{(\log x)^{10}<s\leq x^\frac \alpha 2}f(s).
\end{align*}
Applying (\ref{zpsi}) to the sum and $f(v)\ll v^\epsilon$ to the function then gives
\begin{align*}\sum_{(\log x)^{10}<s\leq x^\frac \alpha 2}\frac{f(s)}{s}\ll &\left(\sum_{(\log x)^{10}<v\leq x^\frac \alpha 2-1}\frac{1}{v^{1-2\epsilon}(v+1)} \right)+\frac{1}{x^{\frac \alpha 2-2\epsilon}}
\ll (\log x)^{-9}.
\end{align*}
Plugging this into (\ref{T2sum}) then yields the lemma.

\end{proof}

The next sections will then set up the machinery to prove Theorems \ref{roughbd} and \ref{roughbd2}.
\section{The Sum over Rough Numbers}

Hereafter, for ease of notation, we will write $X=x/s$, $Y=y/s$, and $b\equiv a\overline{s}\pmod k$.  We are tasked with evaluating
$$\sum_{\substack{X\leq t\leq X+Y \\ t\equiv b\pmod k}}f(t).$$

From here, our methods will largely follow those of \cite{Sh}.  As in that paper, we will let $p(n)$ and $q(n)$ denote the greatest and least prime factors of $n$, respectively, and we define
$$\Phi(x,y,z;k,a)=\sum_{\substack{x\leq n\leq x+y \\ n\equiv a \pmod k \\ q(n)>z}}1.$$
By Lemma 2 of \cite{Sh}, we know that 
\begin{gather}\label{rough}\Phi(x,y,z;k,a)\ll \frac{y}{\phi(k)\log z}+z^2.\end{gather}
Generally, we will be taking $y$, $k$, and $z$ such that $y/k$ is much larger than $z$, and hence the latter term can be ignored.

We also record a few statements about smooth numbers for later.  From \cite[Lemma 1]{Sh},
\begin{gather}\label{psibound}
\psi(x,\log x\log\log x)\ll \exp\left(\frac{3\log x}{(\log \log x)^\frac 12}\right).
\end{gather}
Meanwhile, for larger $z$, we know from (\ref{Di}) that
\begin{gather}\label{smoothbd}
\psi(x,z)\ll x\exp\left(-\frac{\log x(\log x-\log z)}{\log z}\right).
\end{gather}
We will also require one more preliminary lemma before we turn our attention to the main sum.  This is a slightly altered version of \cite[Lemma 4]{Sh}.

\begin{lemma}\label{lemmalogr}
Let $f\in \mathcal M(\beta)$.  Then as $z\to\infty$,
$$\sum_{\substack{n\geq z^\frac 12 \\ p(n)\leq z^\frac 1r \\ (n,k)=1 \\ n\in \mathcal T}}\frac{f(n)}{n}\ll \exp\left(-\frac 12\left(1-\frac{5}{4}\beta\right)r\log r+\sum_{\substack{J<p\leq z^\frac 1r \\ p\nmid k}}\frac{f(p)}{p}+2A_1 (\log\log x)^{\beta}r^{1-\frac{5}{4}\beta}\right)$$
uniformly in $k$ and $r$, provided that $1\leq r\leq \frac{\log z}{4\log\log z}$.
\end{lemma}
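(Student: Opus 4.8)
The plan is to adapt Shiu's proof of \cite[Lemma 4]{Sh}, tracking carefully the new ingredients coming from Condition \ref{Condition1} (so $f(p^l)$ may be as large as $A_1^l(\log\log x)^{\beta l}$) and from the restriction to $J$-rough $n$. First I would open up the multiplicative structure: since $n\in\mathcal T$ has all prime factors exceeding $J$ and every prime power factor $p^l\mid n$ with $p\le z^{1/r}$, write
$$\sum_{\substack{n\ge z^{1/2}\\ p(n)\le z^{1/r}\\ (n,k)=1\\ n\in\mathcal T}}\frac{f(n)}{n}\le z^{-\sigma/2}\prod_{\substack{J<p\le z^{1/r}\\ p\nmid k}}\left(1+\frac{f(p)}{p^{1-\sigma}}+\sum_{l\ge 2}\frac{f(p^l)}{p^{l(1-\sigma)}}\right)$$
for a parameter $\sigma>0$ to be chosen, using $n^{-1}\le z^{-\sigma/2}n^{\sigma-1}$ on the range $n\ge z^{1/2}$ (Rankin's trick). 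The factor $z^{-\sigma/2}=\exp(-\tfrac{\sigma}{2}\log z)$ is what will produce the $-\tfrac12(1-\tfrac54\beta)r\log r$ once $\sigma$ is chosen proportional to $\tfrac{\log r}{\log z}$.

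Next I would estimate the Euler product. Taking logarithms and using $\log(1+w)\le w$, the main term is $\sum_{J<p\le z^{1/r},\,p\nmid k} f(p)p^{-(1-\sigma)}$. Here I would split $p^{-(1-\sigma)}=p^{-1}p^{\sigma}$ and bound $p^{\sigma}\le (z^{1/r})^{\sigma}=e^{\sigma\log z/r}$; choosing $\sigma=\tfrac{c\log r}{\log z}$ for a suitable constant $c$ makes $e^{\sigma\log z/r}=r^{c/r}=1+O(\tfrac{\log r}{r})$, so that $\sum f(p)p^{-(1-\sigma)}\le \sum_{J<p\le z^{1/r},\,p\nmid k}\tfrac{f(p)}{p}+(\text{small})$; the ``small'' piece, of size $O\big(\tfrac{\log r}{r}\sum_{p\le z^{1/r}}\tfrac{f(p)}{p}\big)$, is absorbed into the $2A_1(\log\log x)^\beta r^{1-\frac54\beta}$ error using $f(p)\le A_1(\log\log x)^\beta$ from Condition \ref{Condition1} and Mertens. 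The prime-power tail $\sum_{p>J}\sum_{l\ge2}f(p^l)p^{-l(1-\sigma)}$ is handled exactly as in \eqref{psum4}: it is $O\big(A_1^2(\log\log x)^{2\beta}J^{-(1-2\sigma)}\big)=o(1)$ since $J=(\log\log x)^{1-\beta}$ and $\sigma\to0$. Assembling these and the Rankin factor gives the claimed bound with $\sigma\log z/2=\tfrac{c}{2}\log r$; optimizing the constant $c$ against the error terms (and using $\beta$ small) is what pins down the exponent $\tfrac12(1-\tfrac54\beta)$ rather than Shiu's $\tfrac1{10}$.

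The main obstacle I expect is the bookkeeping around the choice of $\sigma$ in the regime where $r$ is as large as $\tfrac{\log z}{4\log\log z}$: there $z^{1/r}$ is only of size $(\log z)^4$, so the interval $(J,z^{1/r}]$ of primes is short and the Mertens-type estimate $\sum_{J<p\le z^{1/r}}\tfrac1p=\log\tfrac{\log z}{r\log J}+O(\tfrac1J)$ must be controlled uniformly, and one must check that $p^{\sigma}\le r^{c/r}$ really is $1+O(\log r/r)$ uniformly for all $r$ in the allowed range — in particular that $\sigma<1$ and the prime-power tail bound $J^{-(1-2\sigma)}=o(1)$ survive. A secondary subtlety is ensuring the error term $2A_1(\log\log x)^\beta r^{1-\frac54\beta}$ genuinely dominates the accumulated lower-order contributions; this is where the exponent $1-\tfrac54\beta$ (rather than $1$) on $r$ is forced, matching the exponent appearing in the $r\log r$ term, so the two stay compatible as $r$ grows. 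Once those uniformities are verified the rest is routine.
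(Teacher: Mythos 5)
Your high-level plan (Rankin's trick, Euler product, estimate the exponent shift) is the same as the paper's, but two things go wrong in the calibration of the parameter and in the estimate of the error term, and together they constitute a real gap. First, your stated choice $\sigma = \frac{c\log r}{\log z}$ (a constant $c$) cannot produce the main term: it yields $z^{-\sigma/2} = r^{-c/2}$, whereas the Lemma requires $\exp\bigl(-\tfrac12(1-\tfrac54\beta)r\log r\bigr)$. To get that, you must take $\sigma = 1-\delta = \frac{(1-\frac54\beta)\,r\log r}{\log z}$ — the extra factor of $r$ is essential. But then $v^{\sigma} = (z^{1/r})^{\sigma} = r^{1-\frac54\beta}$, which is large, not $1+O(\log r/r)$; your claim that $r^{c/r}-1 = O(\log r/r)$ is simply incompatible with the $\sigma$ that makes the Rankin factor correct.

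Second, even granting the correct $\sigma$, the step where you pull out $\max_{p\le v} (p^{\sigma}-1) = v^{\sigma}-1$ and multiply by $\sum_{J<p\le v}\frac{f(p)}{p}$ is too lossy: it gives an error of order $A_1(\log\log x)^{\beta}\,(r^{1-\frac54\beta}-1)\cdot\log\log v$, a full factor $\log\log v$ larger than the Lemma's $2A_1(\log\log x)^{\beta}\,r^{1-\frac54\beta}$, and $\log\log v$ is unbounded as $z\to\infty$ (it is roughly $\log\log z$ for small $r$). This ruins the bound after exponentiation. The paper's proof avoids the loss by expanding $p^{1-\delta}-1=\sum_{n\ge1}\frac{((1-\delta)\log p)^n}{n!}$, swapping the order of summation, factoring out one power of $\log p$, and using Mertens in the form $\sum_{p\le v}\frac{\log p}{p}\asymp \log v$ (rather than $\sum_{p\le v}\frac1p\asymp\log\log v$). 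That replaces the $\log\log v$ you incur by a $\log v$, which is then consumed by the factorials, resulting in exactly $\le 2A_1(\log\log x)^{\beta}\,v^{1-\delta}$. To repair your proof you should carry out that expansion rather than uniformly bounding $p^{\sigma}$ by $v^{\sigma}$; the rest of your outline (in particular the treatment of the prime-power tail, which indeed mirrors \eqref{psum4}) is fine.
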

\begin{proof}
Let $\frac 34 <\delta\leq 1$.  By Rankin's trick,
\begin{align*}
\sum_{\substack{n\geq w \\ p(n)\leq v \\ (n,k)=1 \\ n\in \mathcal T}}\frac{f(n)}{n}\leq &w^{\delta-1}\sum_{\substack{n\geq w \\ p(n)\leq v \\ (n,k)=1 \\ n\in \mathcal T}}\frac{f(n)}{n^\delta}\leq w^{\delta-1}\sum_{\substack{n\geq 1 \\ p(n)\leq v \\ (n,k)=1 \\ n\in \mathcal T}}\frac{f(n)}{n^\delta}\\
=&w^{\delta-1}\prod_{\substack{J<p\leq v \\ p\nmid k}}\left(1+\frac{f(p)}{p^\delta}+\sum_{l=2}^\infty \frac{f(p^l)}{p^{l\delta}}\right)\ll \exp\left((\delta-1)\log w+\sum_{\substack{J<p\leq v \\ p\nmid k}}\frac{f(p)}{p^\delta}\right)
\end{align*}
by (\ref{psum4simple}).
We rewrite $\frac{f(p)}{p^\delta}$ as $\frac{f(p)}{p}+\frac{f(p)}{p}(p^{1-\delta}-1)$ and note that
\begin{align*}
\sum_{\substack{J<p\leq v \\ p\nmid k}}\frac{f(p)}{p}(p^{1-\delta}-1)\leq &A_1 (\log\log x)^{\beta}\sum_{p\leq v}\frac 1p\sum_{n=1}^\infty \frac{\left((1-\delta)(\log p)\right)^n}{n!}\\
\leq &A_1 (1-\delta)(\log\log x)^{\beta}\sum_{n=1}^\infty \frac{\left((1-\delta)(\log v)\right)^{n-1}}{n!}\sum_{p\leq v}\frac{\log p}p\\
\leq &A_1 (\log\log x)^{\beta}\sum_{n=1}^\infty \frac{\left((1-\delta)(\log v)\right)^{n}}{n!}\\
\leq &2A_1 (\log\log x)^{\beta}v^{1-\delta}.
\end{align*}
So
\begin{align*}
\sum_{\substack{n\geq u \\ p(n)\leq v \\ (n,k)=1 \\ n\in \mathcal T}}\frac{f(n)}{n}\ll \exp\left((\delta-1)\log u+\sum_{\substack{J<p\leq v \\ p\nmid k}}\frac{f(p)}{p}+2A_1 (\log\log x)^{\beta}v^{1-\delta}\right).
\end{align*}

Letting $u=z^\frac 12$, $v=z^\frac 1r$, and $\delta=1-\frac{(1-\frac 54\beta )r\log r}{\log z}$ then gives

\begin{align*}
\sum_{\substack{n\geq u \\ p(n)\leq v \\ (n,k)=1 \\ n\in \mathcal T}}\frac{f(n)}{n}\ll \exp\left(-\frac 12\left(1-\frac 54\beta \right)r\log r+\sum_{\substack{J<p\leq z^\frac 1r \\ p\nmid k}}\frac{f(p)}{p}+2A_1 (\log\log x)^{\beta}r^{1-\frac 54\beta }\right).
\end{align*}
\end{proof}

\section{Partitioning the Sum: The Proof of Theorem \ref{roughbd}}
Now, we partition the sum as in Section 5 of \cite{Sh}.  Let 
$$z=Y^{\frac{\alpha}{10}}.$$
Note that by this definition, $Y/k>z^2$, and hence we can omit the $z^2$-term in (\ref{rough}).

For each $n$ with $X\leq n<X+Y$, $n\equiv b\pmod k$, write
$$n=p_1^{u_1} \cdots  p_j^{u_j}p_{j+1}^{u_{j+1}}\cdots p_l^{u_l}=c_nd_n$$
where $p_1<p_2<\cdots <p_l$ and $c_n$ is the product of the first $j$ terms, where $j$ is chosen such that
$$c_n\leq z<c_np_{j+1}^{u_{j+1}}.$$
We then define the following four subclasses of $\mathcal T$:
\begin{gather*}
\mbox{I}:q(d_n)>z^\frac 12,\\
\mbox{II}:q(d_n)\leq z^\frac 12,c_n\leq z^\frac 12,\\
\mbox{III}:q(d_n)\leq \left(\log x\log \log x\right)^5,c_n>z^\frac 12,\\
\mbox{IV}:\left(\log x\log \log x\right)^5<q(d_n)\leq z^\frac 12,c_n>z^\frac 12.
\end{gather*}
\subsection{Class I}
Here, we have
$$\sum_{n\in I}f(n)\leq \sum_{\substack{c\leq z \\ c\in \mathcal T
}}f(c)\sum_{\substack{\frac Xc\leq d\leq \frac{X+Y}{c} \\ d\equiv b\overline{c}\pmod k \\ q(d)>z^\frac 12}}f(d).$$
Note that for any such $n$, $q(d)>z^\frac 12=Y^{\frac{\alpha}{20}}=X^{\frac{\alpha\kappa}{20}}.$  So $d$ can have at most $\frac{20}{\alpha\kappa}$ prime factors, and hence $f(d)\leq \left(A_1(\log\log x)^\beta\right)^{\frac{20}{\alpha\kappa}}$.  Noting that the number of $z^\frac 12$-rough numbers up to $x$ in a congruence class mod $k$ is $\ll \frac{y}{\phi(k)\log z}$, and recalling that $\beta<\frac{\alpha\kappa}{41}$, we can bound the above with
\begin{align*}
\sum_{n\in I}f(n)\leq &\left(A_1(\log\log x)^\beta\right)^{\frac{20}{\alpha\kappa}}\sum_{\substack{c\leq z \\ c\in \mathcal T}}f(c)\sum_{\substack{\frac Xc\leq d\leq \frac{X+Y}{c} \\ d\equiv b\overline{c}\pmod k \\ q(d)>z^\frac 12}}1\\
\ll &\frac{Y}{\phi(k)\log z}\left(\log\log x\right)^{\frac{20}{41}}\sum_{\substack{c\leq z \\ c\in \mathcal T}}\frac{f(c)}{c}\\
\ll &\frac{Y}{\phi(k)\log x}\left(\log\log x\right)^{\frac{20}{41}}\exp\left(\sum_{\substack{J<p\leq X \\ p\nmid k}}\frac{f(p)}{p}\right),
\end{align*}
where the last line is by (\ref{psum4simple}).  


\subsection{Class II}
In this case, there must exist a $p|n$ such that $p\leq z^\frac 12$ and $p^u>z^\frac 12$.  For such a $p$, let $u_p$ denote the least integer such that $p^{u_p}>z^\frac 12$.  Then $p^{-u_p}\leq \min\{z^{-\frac 12},p^{-2}\}$.  Hence
$$\sum_{p\leq z^\frac 12}\frac{1}{p^{u_p}}\leq \sum_{p\leq z^\frac 14}z^{-\frac 12}+\sum_{p>z^\frac 14}\frac 1{p^2}\ll z^{-\frac 14}.$$
Then for any $\epsilon>0$,
$$\sum_{n\in II}f(n)\ll x^\epsilon\sum_{p\leq z^\frac 12}\sum_{\substack{X\leq n<X+Y \\ n\equiv b\pmod k \\ p^{u_p}|n }}1\ll x^\epsilon\sum_{p\leq z^\frac 12}\frac{Y}{kp^{u_p}}\ll \frac{Yx^\epsilon}{kz^{\frac 14}}.$$
\subsection{Class III}
In this case, we have $c$ such that $z^\frac 12<c\leq z$ and $c$ is $(\log x\log \log x)^5$-smooth.  So 
$$\sum_{n\in III}f(n)\leq \sum_{\substack{z^\frac 12<c\leq z \\ p(c)\leq (\log x\log \log x)^5}}f(c)\sum_{\substack{\frac Xc\leq d\leq \frac{X+Y}{c} \\ d\equiv b\overline{c}\pmod k }}f(d)\ll \sum_{\substack{z^\frac 12<c\leq z \\ p(c)\leq (\log x\log \log x)^5}}\frac{Yx^\epsilon}{kc}$$
Let $$v=\left(\frac{\log z^\frac 12}{5\log\left(\log x\log\log x\right)}\right)^{-\frac{\log z^\frac 12}{5\log\left(\log x\log\log x\right)}}\ll \left(\log z\right)^{-\frac{\log z}{11\log\log x}}\ll x^{-\frac{\alpha\kappa}{250}}.$$
We can handle the sum dyadically and apply (\ref{smoothbd}), finding
\begin{gather}\label{dyad}\sum_{\substack{z^\frac 12<c\leq z \\ p(c)\leq (\log x\log \log x)^5}}\frac{1}{c}\leq \sum_{j=0}^{\frac{\log z}{\log 2}}\frac{1}{2^jz^\frac 12}\sum_{\substack{2^jz^\frac 12<c\leq 2^{j+1}z^\frac 12 \\ p(c)\leq (\log x\log \log x)^5}}1 \ll \sum_{j=0}^{\frac{\log z}{\log 2}}\frac{1}{2^jz^\frac 12}\left[2^{j+1}z^\frac 12v\right]\ll v\log z\ll x^{-\frac{\alpha\kappa}{300}}.
\end{gather}
So
$$\sum_{n\in III}f(n)\ll \frac{Yx^{\epsilon-\frac{\alpha\kappa}{300}}}{k}\ll \frac{Yx^{-\epsilon'}}{k}$$
for some $\epsilon'>0$.

\subsection{Class IV}
This class gives us
$$\sum_{n\in IV}f(n)\leq \sum_{\substack{z^\frac 12<c\leq z \\ c\in \mathcal T
}}f(c)\sum_{\substack{\frac Xc\leq d\leq \frac{X+Y}{c} \\ d\equiv b\overline{c}\pmod k \\ q(d)>p(c) \\ (\log x\log\log x)^5< q(d)\leq z^\frac 12}}f(d).$$
Let 
$$r_0=\left[\frac{\log z}{5\log(\log x\log \log x)}\right],$$
where we now have $\log x\log \log x>z^{\frac{1}{r_0+1}}$.  Let $r$ be such that $2\leq r\leq r_0$, and consider those $n$ for which $z^{\frac{1}{r+1}}<q(d_n)\leq z^\frac 1r$. For such $n$, we know that the number of prime factors of $d_n$ is bounded by
$$\Omega(d_n)\leq \frac{\log x}{\log(q(d_n))}\leq \frac{20r}{\alpha\kappa}.$$
So
$$f(d_n)\leq \left(A_1(\log\log x)^\beta\right)^{\frac{20r}{\alpha\kappa}}\leq  A_1^{\frac{20r}{\alpha\kappa}}\left(\log\log x\right)^{\frac{20r}{41}}.$$
Hence
\begin{align*}
\sum_{n\in IV}f(n)\leq  &\sum_{2\leq r\leq r_0}A_1^{\frac{20r}{\alpha\kappa}}\left(\log\log x\right)^{\frac{20r}{41}}\sum_{\substack{z^\frac 12<c\leq z \\ p(c)<z^\frac 1r \\ c\in \mathcal T}}f(c)\Psi\left(\frac Xc,\frac Yc,z^\frac{1}{r+1};k,b\overline{c}\right)\\
\leq  &\sum_{2\leq r\leq r_0}A_1^{\frac{20r}{\alpha\kappa}}\left(\log\log x\right)^{\frac{20r}{41}}\left(\frac{Y(r+1)}{\phi(k)\log z}\right)\sum_{\substack{z^\frac 12<c\leq z \\ p(c)<z^\frac 1r \\ c\in \mathcal T}}\frac{f(c)}{c}
\end{align*}
by (\ref{rough}).

Let $$r_1=(\log\log x)^{1-\frac{\beta}{2}}.$$

We can split the sum over $r$ into
$$\sum_{2\leq r\leq r_1-1}+\sum_{r_1\leq r\leq r_0}.$$

For the former sum over $r$, we bound the sum over $c$ using (\ref{psum4simple}):
\begin{align*}
\sum_{2\leq r\leq r_1-1}&A_1^{\frac{20r}{\alpha\kappa}}\left(\log\log x\right)^{\frac{20r}{41}}\left(\frac{Y(r+1)}{\phi(k)\log z}\right)\sum_{\substack{z^\frac 12<c\leq z \\ p(c)<z^\frac 1r \\ c\in \mathcal T}}\frac{f(c)}{c}\\
\ll &\sum_{2\leq r\leq r_1-1}\left(\frac{Y(r+1)}{\phi(k)\log z}\right)A_1^{\frac{20r}{\alpha\kappa}}\left(\log\log x\right)^{\frac{20r}{41}}\exp\left(\sum_{\substack{J<p\leq z \\ p\nmid k}}\frac{f(p)}{p}\right)\\
\ll &\left(\frac{Yr_1}{\phi(k)\log z}\right)A_1^{\frac{20r_1}{\alpha\kappa}}\left(\log\log x\right)^{\frac{20r_1}{41}}\exp\left(\sum_{\substack{J<p\leq z \\ p\nmid k}}\frac{f(p)}{p}\right)\\
\ll &\left(\frac{Y}{\phi(k)\log z}\right)\exp\left((\log\log x)^{1-\frac{\beta}{3}}+\sum_{\substack{J<p\leq z \\ p\nmid k}}\frac{f(p)}{p}\right)
\end{align*}
Clearly, this is $\ll \frac{Y}{\phi(k)(\log x)^{1-\epsilon}}$ for any $\epsilon>0$.


Turning now to the interval where $r\geq r_1$, we can apply Lemma \ref{lemmalogr} to bound the remaining sum as
\begin{align*}
&\ll \sum_{r_1\leq r\leq r_0}A_1^{\frac{20r}{\alpha\kappa}}\left(\log\log x\right)^{\frac{20r}{41}}\left(\frac{Y(r+1)}{\phi(k)\log z}\right)\\
&\cdot \exp\left(-\frac 12\left(1-\frac 54\beta\right)r\log r+\sum_{\substack{J<p\leq z^\frac 1r \\ p\nmid k}}\frac{f(p)}{p}+2A_1 (\log\log x)^{\beta}r^{1-\frac 54\beta}\right)\\
&\ll  \sum_{r_1\leq r\leq r_0}\left(\frac{Y}{\phi(k)\log z}\right)\\
&\cdot \exp\left(\sum_{\substack{J<p\leq X \\ p\nmid k}}\frac{f(p)}{p}+\frac{20r\log A_1}{\alpha\kappa}+\frac{20r}{41}\log\log\log x+2\log r-\frac 12\left(1-\frac 54\beta \right)r\log r+2A_1 (\log\log x)^{\beta}r^{1-\frac 54\beta }\right).
\end{align*}
Let
\begin{gather*}g(r)=\frac 12\left(1-\frac 54\beta \right)\log r,\\
h(r)=\frac{20\log A_1}{\alpha\kappa}+\frac{20}{41}\log\log\log x+\frac{2\log r}{r}+2A_1 (\log\log x)^{\beta}r^{-\frac 54\beta }
\end{gather*}
So the above is
$$\ll \sum_{r_1\leq r\leq r_0}\left(\frac{Y}{\phi(k)\log z}\right)\exp\left(-r(g(r)-h(r))+\sum_{\substack{J<p\leq X \\ p\nmid k}}\frac{f(p)}{p}\right).$$
Now, since $\beta<\frac{\alpha\kappa}{41}\leq \frac{1}{164}$,
we see that
\begin{align*}g(r_1)=&\frac 12\left(1-\frac 74\beta+\frac 58\beta^2+o(1)\right)\log\log\log x\\
\geq &\left(\frac 12-\frac{1}{164}+o(1)\right)\log\log\log x.
\end{align*}
Meanwhile,
\begin{align*}h(r_1)=&\frac{20}{41}\log\log\log x+2A_1 (\log\log x)^{\beta}(\log \log x)^{-\frac 54\beta+\frac 58\beta^2}+O(1)\\
=&\left(\frac 12-\frac{1}{41}+o(1)\right)\log\log\log x.
\end{align*}
So 
$$g(r_1)-h(r_1)\geq \left(\frac{1}{60}+o(1)\right)\log\log\log x.$$
Noting also that $g'(r)>h'(r)$ for any $r\in [r_1,r_0]$, we can bound the above sum over $r$ as
\begin{align*}\ll &\sum_{r_1\leq r\leq r_0}\left(\frac{Y}{\phi(k)\log z}\right)\exp\left(-\frac{r}{60}\log\log\log x+\sum_{\substack{J<p\leq X \\ p\nmid k}}\frac{f(p)}{p}\right)\\
\ll &\left(\frac{Y}{\phi(k)\log z}\right)\exp\left(\sum_{\substack{J<p\leq X \\ p\nmid k}}\frac{f(p)}{p}\right)
\end{align*}
Putting these together,
\begin{align*}\sum_{n\in IV}f(n)\ll & \left(\frac{Y}{\phi(k)\log x}\right)\exp\left((\log\log x)^{1-\frac{\beta}{3}}+\sum_{\substack{J<p\leq X \\ p\nmid k}}\frac{f(p)}{p}\right)\\
\ll & \left(\frac{Y}{\phi(k)(\log x)^{1-\epsilon}}\right)\exp\left(\sum_{\substack{J<p\leq X \\ p\nmid k}}\frac{f(p)}{p}\right)
\end{align*}
for any $\epsilon>0$.  This completes the proof of Theorem \ref{roughbd} and hence of Theorem \ref{Maintheorem}.

\section{Alterations for Smooth-Supported Functions}

Here, we alter the steps of the previous section to prove the analogous theorems for smooth-supported functions.  We note first that $\frac{\log Q}{\log x}=o(1)$, else the two bounds in Theorem \ref{MnThm2} are just Shiu's theorem and Theorem \ref{Maintheorem}, respectively.

Class I cannot happen, since if $q(d_n)>z^\frac 12>Q$ then $n$ is not $Q$-smooth.  For Classes II and III, we can use the same bounds as in the non-smooth case.  So we require only class IV, as the first three classes give
$$\sum_{\substack{X\leq t\leq X+Y \\ t\equiv a\pmod k}}f(t)=\sum_{n\in IV}f(n)+O\left(\frac{Y}{kz^{\frac 14-\epsilon}}\right)$$

We consider first the case when $f\in M_Q$.  Since we no longer have an unbounded $f(p)$, we could apply Lemma 4 of Shiu's original paper \cite{Sh}, which states that if $f\in M$ then
\begin{gather}\label{Shlem}\sum_{\substack{n\geq z^\frac 12 \\ p(n)\leq z^\frac 1r \\ (n,k)=1 \\ n\in \mathcal T}}\frac{f(n)}{n}\ll \exp\left(-\frac{1}{10}r\log r+\sum_{\substack{p\leq z \\ p\nmid k}}\frac{f(p)}{p}\right).
\end{gather}
The proof there is very similar to our proof of Lemma \ref{lemmalogr} except that Shiu takes $\delta=1-\frac{r\log r}{4\log z}$, whereas we take $\delta=1-\frac{(1-\frac 54\beta )r\log r}{\log z}$.  If we were to apply our $\delta$ in Shiu's original Lemma 4, we would instead have
\begin{gather}\label{Shlem4}\sum_{\substack{n\geq z^\frac 12 \\ p(n)\leq z^\frac 1r \\ (n,k)=1 \\ n\in \mathcal T}}\frac{f(n)}{n}\ll \exp\left(-\frac 12\left(1-\frac 54\beta\right)r\log r+\sum_{\substack{p\leq z \\ p\nmid k}}\frac{f(p)}{p}\right),
\end{gather}
where the $\ll$ depends on our choice of $\beta$.

Now, as we are dealing with Class IV, we note that since $z^\frac 12<c\leq z$, we must have $d>x^{1-\frac{\alpha \kappa}{20}}$.  So if $n$ is such that $f(n)\neq 0$ then $n$ must have at least $\frac{\left(1-\frac{\alpha \kappa}{20}\right)\log x}{\log Q}$ prime factors.  Recalling that
$$\Omega(d_n)\leq \frac{20r}{\alpha \kappa},$$
we must then have
$$\frac{\alpha\kappa \left(1-\frac{\alpha \kappa}{20}\right)\log x}{20\log Q}\leq r$$
or else $n$ is not $Q$-smooth.  Let 
$$r_2=\frac{\alpha\kappa \left(1-\frac{\alpha \kappa}{20}\right)\log x}{20\log Q}=\frac{\alpha\kappa \left(1-\frac{\alpha \kappa}{20}\right)}{20}u.$$
Then 
\begin{align*}
\sum_{n\in IV}f(n)\leq  &\sum_{r_2\leq r\leq r_0}A_1^{\frac{20r}{\alpha\kappa}}\left(\frac{Y(r+1)}{\phi(k)\log z}\right)\sum_{\substack{z^\frac 12<c\leq z \\ p(c)<z^\frac 1r }}\frac{f(c)}{c}\\
\ll  &\sum_{r_2\leq r\leq r_0}\left(\frac{Y}{\phi(k)\log z}\right)\exp\left(-\frac 12\left(1-\frac 54\beta\right)r\log r+\sum_{\substack{p\leq Q \\ p\nmid k}}\frac{f(p)}{p}+\frac{20r}{\alpha\kappa}\log A_1+2\log r\right)\\
\ll  &\sum_{r_2\leq r\leq r_0}\left(\frac{Y}{\phi(k)\log z}\right)\exp\left(-\frac 12\left(1-\frac 32\beta\right)r\log r\right)\exp\left(\sum_{\substack{p\leq Q \\ p\nmid k}}\frac{f(p)}{p}\right)\\
\ll  &\left(\frac{Y}{\phi(k)\log z}\right)\exp\left(-\left(\frac 12-\beta\right)r_2\log r_2\right)\exp\left(\sum_{\substack{p\leq Q \\ p\nmid k}}\frac{f(p)}{p}\right).
\end{align*}
Noting that 
$$\exp\left(-\left(\frac 12-\beta\right)r_2\log r_2\right)=u^{-\frac{\alpha\kappa}{40}\left(1-2\beta\right)u},$$
we can let $$C_1=\frac{\alpha\kappa}{41}.$$ 
This then gives us 
$$\sum_{n\in IV}f(n)\ll \rho(u)^{C_1+o(1)}\left(\frac{Y}{\phi(k)\log x}\right)\left(\sum_{\substack{J<p\leq Q \\ p\nmid k}}\frac{f(p)}{p}\right),$$
which proves the first half of the theorem.

For the case of $f\in \mathcal M_Q(\beta)$, we proceed similarly, except that we use Lemma \ref{lemmalogr}.  So again,

\begin{align*}
\sum_{n\in IV}f(n)
&\leq  \sum_{r_2\leq r\leq r_0}A_1^{\frac{20r}{\alpha\kappa}}\left(\log\log x\right)^{\frac{20r}{41}}\left(\frac{Y(r+1)}{\phi(k)\log z}\right)\sum_{\substack{z^\frac 12<c\leq z \\ p(c)<z^\frac 1r \\ c\in \mathcal T}}\frac{f(c)}{c}\\
&\ll  \sum_{r_2\leq r\leq r_0}\left(\frac{Y}{\phi(k)\log z}\right)\exp\left(-r(g(r)-h(r))+\sum_{\substack{J<p\leq Q \\ p\nmid k}}\frac{f(p)}{p}\right),
\end{align*}
where $g$ and $h$ are as in the previous section.

Since $r_2>r_1$, we note as before that $g'(r)>h'(r)$ for all $r\in [r_2,r_0]$.  Here, however, we also have
\begin{gather*}g(r_2)=\frac 12\left(1-\frac 54\beta \right)\left[\log u+\log\left(\frac{\alpha\kappa \left(1-\frac{\alpha \kappa}{20}\right)}{20}\right)\right].
\end{gather*}
Moreover, since $\frac{\log x}{\log Q}\geq \log \log x$ by the assumption in Theorem \ref{MnThm2}, we know that
\begin{align*}
h(r_2)=&\frac{20}{41}\log\log\log x+O(1)+2A_1 (\log\log x)^{\beta}\left(\frac{\alpha\kappa \left(1-\frac{\alpha \kappa}{20}\right)}{20}u\right)^{-\frac 54\beta }\\
=&\left(\frac{20}{41}+o(1)\right)\log\log\log x\leq \left(\frac{20}{41}+o(1)\right)\log u.\end{align*}
Since $\frac{5}{8}\beta<\frac{5}{1312}$, we have
$$h(r_2)-g(r_2)\leq -\left(\frac{11}{1312}+o(1)\right)\log \left(u\right).$$
So letting 
$$C_2=\frac{5}{656}\left(\frac{\alpha\kappa \left(1-\frac{\alpha \kappa}{20}\right)}{20}\right),$$
we have
$$\exp\left(-r_2(g(r_2)-h(r_2))\right)\ll \rho(u)^{C_2+o(1)}.$$
Thus,
$$\sum_{n\in IV}f(n)\ll \rho(u)^{C_2+o(1)}\left(\frac{Y}{\phi(k)\log x}\right)\left(\sum_{\substack{J<p\leq Q \\ p\nmid k}}\frac{f(p)}{p}\right).$$
This completes the proof of Theorem \ref{roughbd2}, and by extension, the proof of Theorem \ref{MnThm2}.

\section{Application: the $d$-fold Divisor Function}
As in Section 6 of Shiu's original paper, we use this result to prove a statement about the divisor function.  Here, we let $\tau_d$ denote the $d$-fold divisor function, and let $\tau=\tau_2$.
\begin{varthmdfold}
Let $x$, $y$, and $k$ be as in Theorem \ref{Maintheorem}.  Choose an integer $d\geq 2$, and let $R=\frac{\log\log\log x}{g(x)}$ for some $g(x)$ such that $g(x)\to\infty$ as $x\to\infty$.  Then for any $\epsilon>0$,
$$\sum_{\substack{x\leq n\leq x+y \\ n\equiv a\pmod k}}\tau_d(n)^{R}\ll \frac{y(\log x)^\epsilon}{\phi(k)}\left(\frac{\phi(k)}{k}\log x\right)^{d^{R}-1}.$$
Moreover,
$$\sum_{\substack{x\leq n\leq x+y \\ n\equiv a\pmod k \\ n\in \mathcal S(Q)}}\tau(n)^{R}\ll \rho(u)^{C_2+o(1)}\frac{y(\log x)^\epsilon}{\phi(k)}\left(\frac{\phi(k)}{k}\log x\right)^{d^{R}-1}.$$
\end{varthmdfold}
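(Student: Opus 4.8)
The plan is to derive this directly from Theorem~\ref{Maintheorem} and Theorem~\ref{MnThm2} by taking $f=\tau_d^{\,R}$, in the spirit of Section~6 of \cite{Sh}. The genuinely new point is that $f$ now depends on $x$ through $R$, so we must track that dependence and, in particular, verify that $\tau_d^{\,R}$ stays inside $\mathcal M(\beta)$ even as $R\to\infty$.

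First I would check membership. For Condition~\ref{Condition1}, use $\tau_d(p^l)=\binom{l+d-1}{d-1}\le 2^{l+d-1}\le(2^d)^l$, so $\tau_d(p^l)^R\le(2^{dR})^l$; since $dR\log 2=\frac{(d\log 2)\log\log\log x}{g(x)}=o(\log\log\log x)$, we get $2^{dR}\le(\log\log x)^\beta$ for $x$ large, and hence Condition~\ref{Condition1} holds with any fixed $\beta<\frac{\alpha\kappa}{41}$. For Condition~\ref{Condition2}, use the maximal-order bound $\log\tau_d(n)\ll_d\frac{\log n}{\log\log n}$ (which follows, e.g., from $\tau_d(n)\le\tau(n)^{(d-1)\log_2 d}$ and the classical estimate for $\tau$): if $n>\log x$ then $\tau_d(n)^R\ll_d\exp\!\big(\tfrac{C_dR\log n}{\log\log n}\big)\le n^{C_dR/\log\log\log x}=n^{C_d/g(x)}\le n^\epsilon$, and if $n\le\log x$ then $\tau_d(n)^R\ll_d\exp\!\big(\tfrac{C_dR\log\log x}{\log\log\log x}\big)=\exp\!\big(\tfrac{C_d\log\log x}{g(x)}\big)\le(\log x)^\epsilon$, both holding for $x$ large (depending on $\epsilon$, $d$, $g$). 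Thus $\tau_d^{\,R}\in\mathcal M(\beta)$ for all sufficiently large $x$.

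Next I would evaluate the Euler factor. Since $\tau_d(p)=d$ for every prime, Theorem~\ref{Maintheorem} yields, for any $\epsilon_0>0$,
\[\sum_{\substack{x\le n\le x+y\\ n\equiv a\pmod k}}\tau_d(n)^R\ll\frac{x}{\phi(k)(\log x)^{1-\epsilon_0}}\exp\!\Big(d^R\!\!\sum_{\substack{p\le x\\ p\nmid k}}\tfrac1p\Big).\]
Mertens' theorem together with $\log\frac{\phi(k)}{k}=\sum_{p\mid k}\log(1-\tfrac1p)=-\sum_{p\mid k}\tfrac1p+O(1)$ gives $\sum_{p\le x,\,p\nmid k}\tfrac1p=\log\!\big(\tfrac{\phi(k)}{k}\log x\big)+O(1)$, so the exponential equals $\big(\tfrac{\phi(k)}{k}\log x\big)^{d^R}e^{O(d^R)}$. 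Since $d^R=e^{R\log d}=(\log\log x)^{o(1)}$, we have $e^{O(d^R)}=(\log x)^{o(1)}$; factoring $\big(\tfrac{\phi(k)}{k}\log x\big)^{d^R}=\tfrac{\phi(k)}{k}\log x\cdot\big(\tfrac{\phi(k)}{k}\log x\big)^{d^R-1}$, the surplus $\log x$ cancels $1/(\log x)^{1-\epsilon_0}$ up to $(\log x)^{\epsilon_0+o(1)}$, the factor $\tfrac{\phi(k)}{k}\le1$ is dropped, and choosing $\epsilon_0<\epsilon$ gives the first bound.

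For the smooth statement I would apply the second half of Theorem~\ref{MnThm2} (i.e.\ Theorem~\ref{roughbd2}) to $g:=\mathbf{1}_{\mathcal S(Q)}\cdot\tau_d^{\,R}$, which is nonnegative, multiplicative, $Q$-smooth supported, and lies in $\mathcal M_Q(\beta)$ by the estimates above; this reproduces the same bound with an extra factor $\rho(u)^{C_2+o(1)}$ and with the Euler factor now $\exp\!\big(d^R\sum_{p\le Q,\,p\nmid k}\tfrac1p\big)=\big(\tfrac{\phi(k)}{k}\log Q\big)^{d^R}e^{O(d^R)}\le\big(\tfrac{\phi(k)}{k}\log x\big)^{d^R}(\log x)^{o(1)}$ (using $Q\le x$), after which the accounting is as before. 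I expect the membership verification to be the only delicate step: the hypothesis $R=\log\log\log x/g(x)$ with $g\to\infty$ is exactly what keeps $2^{dR}\le(\log\log x)^\beta$ and keeps $\tau_d(n)^R$ below $n^\epsilon$ for $n>\log x$ and below $(\log x)^\epsilon$ for $n\le\log x$; everything else is the Mertens-type bookkeeping already used for $\tau_d$ in \cite{Sh}.
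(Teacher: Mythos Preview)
Your proposal is correct and follows essentially the same route as the paper's proof: verify $\tau_d^{\,R}\in\mathcal M(\beta)$, apply Theorem~\ref{Maintheorem} (resp.\ Theorem~\ref{MnThm2}), and simplify the Euler product via Mertens. If anything, your membership check is slightly more thorough---the paper only computes $\tau_d(p)^R=(\log\log x)^{\log d/g(x)}$ and declares Condition~\ref{Condition1} satisfied, whereas you bound $\tau_d(p^l)^R$ for all $l$; and your Condition~\ref{Condition2} argument via the maximal order $\log\tau_d(n)\ll_d\log n/\log\log n$ is the same in substance as the paper's use of the explicit Nicolas--Robin constant together with $\tau_d\le\tau_2^{\,d-1}$. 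One small citation slip: for the smooth part you should invoke Theorem~\ref{MnThm2} itself rather than Theorem~\ref{roughbd2} (the latter is the intermediate rough-number estimate), and you should note that the upper bound on $Q$ required by the second half of Theorem~\ref{MnThm2} is implicitly in force here---though the paper's own statement of Theorem~\ref{dfold} omits this hypothesis as well.
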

\begin{proof}
For any prime $p$,
$$\tau_d(p)^{R}=d^{\frac{\log\log\log x}{g(x)}}=(\log\log x)^{\frac{\log d}{g(x)}}.$$
So Condition 1 holds for $\beta$ as given above.

Moreover, for any $n\geq 3$, we have an effective upper bound for the binary divisor function \cite{NR}:
$$\tau_2(n)^{R}\leq n^{\frac{1.5379 (\log\log\log x)}{g(x)\log\log n}}$$
Noting that 
$$\tau_{d+1}(n)=\sum_{m|n}\tau_d\left(\frac nm\right)\leq \sum_{m|n}\tau_d(n)=\tau_2(n)\tau_d(n),$$
we can inductively bound $\tau_d$ by
$$\tau_d(n)\leq \tau_2(n)^{d-1}.$$
Hence
$$\tau_d(n)^{R}\leq n^{\frac{1.5379 (d-1)(\log\log\log x)}{g(x)\log\log n}}$$
The right-hand side is obviously an increasing function on $n$.  So if $n\leq (\log x)^{10}$ then this is 
$$\leq (\log x)^{\frac{15.379 (d-1)}{g(x)}},$$
which is clearly $\ll (\log x)^\epsilon$ for any $\epsilon$.  If $n>(\log x)^{10}$ then this is
$$\leq n^{\frac{1.5379  (d-1)}{g(x)}},$$
which is $\ll n^\epsilon$ for every $\epsilon>0$.  So Condition 2 also holds.  Hence, $\tau_d(n)^R\in M(\beta)$, where $\beta$ is as in Theorem \ref{Maintheorem}.

Applying Theorem \ref{Maintheorem}, we then have
$$\sum_{\substack{x\leq n\leq x+y \\ n\equiv a\pmod k}}\tau_d(n)^{R}\ll \frac{y}{\phi(k)(\log x)^{1-\epsilon}}\exp\left(d^R\sum_{\substack{p\leq x \\ p\nmid k}}\frac{1}{p}\right)\ll \frac{y(\log x)^\epsilon}{k} \left(\frac{\phi(k)}{k}\log x\right)^{d^{R}-1}$$
as in the original proof in \cite{Sh}.

The proof of the second half of the theorem is nearly identical except that we apply Theorem \ref{MnThm2} instead of Theorem \ref{Maintheorem}.

\end{proof}

`
We note that if we were to prove the smooth portion of Theorem \ref{dfold} by restricting our sum over the primes in the exponential term to $\sum_{p\leq Q}\frac{\tau_d(p)}p$, we would only save a term that is $\rho(u)^{o(1)}$, and hence we do not pursue this idea further here.

\section{Application: Smooth Numbers in Short Intervals}
Finally, we turn to the results of \cite{So} and \cite{Go}.  We find the following improvement.

\begin{vtsmooththm}
Let $Q$, $x$, and $y$ satisfy either the hypotheses in either Theorem \ref{SoundThm} or \ref{GoThm}, where in the former case we also assume the Riemann Hypothesis.  Additionally, assume that $u\geq (\log\log x)^2.$  Then there exists a constant $C_3$ such that 
\begin{gather}
\psi(x+y,Q)-\psi(x,Q)\gg y\rho(u)^{1+\frac{C_3\log\log \log\log x}{\log\log \log x}}.
\end{gather}
In particular, we can take $C_3>1-C_2$.
\end{vtsmooththm}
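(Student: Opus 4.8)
The plan is to bound $S:=\psi(x+y,Q)-\psi(x,Q)$ from below by first estimating a count of ``smooth times smooth'' factorizations of the integers in $I:=[x,x+y)$, and then converting that estimate into a bound for $S$ via H\"older's inequality, using the divisor‑moment bound of Theorem~\ref{MnThm2} (equivalently Theorem~\ref{dfold}) exactly where \cite{So} and \cite{Go} only used the pointwise bound $\tau(n)\ll n^{\epsilon}$. Since every divisor of a $Q$‑smooth number is $Q$‑smooth, writing $\tau^{-}(n):=\#\{d\mid n:\ d\le\sqrt x\}$ we have the identity
\[
N:=\sum_{n\in I\cap\mathcal S(Q)}\tau^{-}(n)=\#\bigl\{(a,b):\ a,b\in\mathcal S(Q),\ a\le\sqrt x,\ ab\in I\bigr\}.
\]

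The first — and, I expect, hardest — task is to show $N\gg y\,\rho(u)^{1+o(1)}$, and for this I would re‑examine the proofs of Theorems~\ref{SoundThm} and \ref{GoThm} rather than invoke them as black boxes. Both arguments establish, for a positive proportion of the $\gg\psi(\sqrt x,Q)$ smooth numbers $a\in[\tfrac12\sqrt x,\sqrt x]$, a lower bound $\gg\rho(u/2)\cdot(y/a)$ for the number of smooth $b$ with $ab\in I$ — that is, for the number of $Q$‑smooth integers in an interval of length $\gg y/\sqrt x$ about a point of size $\asymp\sqrt x$, where $Q$‑smooth numbers have density $\rho(u/2)^{1+o(1)}$; it is here that the Riemann Hypothesis (resp.\ Goudout's sieve input in the unconditional case) enters. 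Using $\psi(\sqrt x,Q)=\sqrt x\,\rho(u/2)(1+o(1))$ by (\ref{Di}) applied at $\sqrt x$ and summing over such $a$,
\[
N\ \gg\ \psi(\sqrt x,Q)\cdot\rho(u/2)\cdot\frac{y}{\sqrt x}\ \gg\ y\,\rho(u/2)^{2}\ =\ y\,\rho(u)^{1+o(1)},
\]
the last step because $\log(u/2)=(1+o(1))\log u$. The weaker exponents in (\ref{Soundbound}) and (\ref{Gobound}) come precisely from the final step of those arguments, in which $N$ is divided by $\max_{n\le x+y}\tau(n)$ to recover a count of integers; we avoid this division. One must track that the only $o(1)$ introduced here is $O(1/\log u)$ in the exponent of $\rho(u)$, which is where the hypothesis $u\ge(\log\log x)^{2}$ is used; the unconditional case is the delicate one.

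Granting the factorization bound, set the slowly‑growing integer
\[
m=m(x)=1+\Bigl\lfloor \tfrac{\log\log\log x}{\log\log\log\log x}\Bigr\rfloor\ \longrightarrow\ \infty .
\]
The multiplicative function $\tau^{m}$ satisfies $\tau(p^{l})^{m}=(l+1)^{m}\le(\log\log x)^{\beta l}$ for all large $x$ (since $m=o(\log\log\log x)$), so $\tau^{m}\in\mathcal M_{Q}(\beta)$ for every fixed $\beta>0$, in particular for some $\beta<\tfrac{\alpha\kappa}{41}$, while Condition~\ref{Condition2} follows from $\tau(n)\le n^{O(1/\log\log n)}$. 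The assumption $u\ge(\log\log x)^{2}$ forces $\log Q\le\log x/(\log\log x)^{2}\le(\log x)\tfrac{\log\log\log x}{\log\log x}$, so the second part of Theorem~\ref{MnThm2} applies with $k=1$, $f=\tau^{m}$ and gives
\[
\sum_{n\in I\cap\mathcal S(Q)}\tau(n)^{m}\ \ll\ \rho(u)^{C_{2}+o(1)}\,\frac{y}{(\log x)^{1-\epsilon}}\,\exp\Bigl(2^{m}\sum_{p\le Q}\tfrac1p\Bigr)\ \ll\ \rho(u)^{C_{2}+o(1)}\,y,
\]
since $2^{m}=(\log\log x)^{o(1)}$ makes $(\log Q)^{2^{m}}=\rho(u)^{o(\log\log\log\log x/\log\log\log x)}$ under $u\ge(\log\log x)^{2}$. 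Now H\"older with exponents $m$ and $m/(m-1)$, together with $\tau^{-}(n)\le\tau(n)$, yields $N\le\bigl(\sum_{n\in I\cap\mathcal S(Q)}\tau(n)^{m}\bigr)^{1/m}S^{\,1-1/m}$, whence
\begin{align*}
S\ \ge\ \frac{N^{m/(m-1)}}{\bigl(\sum_{n\in I\cap\mathcal S(Q)}\tau(n)^{m}\bigr)^{1/(m-1)}}
&\ \gg\ \frac{\bigl(y\,\rho(u)^{1+o(1)}\bigr)^{m/(m-1)}}{\bigl(\rho(u)^{C_{2}+o(1)}\,y\bigr)^{1/(m-1)}}
= y\,\rho(u)^{\,1+\frac{1-C_{2}+o(1)}{m-1}} .
\end{align*}
With this choice of $m$ the exponent equals $1+\dfrac{(1-C_{2}+o(1))\log\log\log\log x}{\log\log\log x}$, which is at most $1+\dfrac{C_{3}\log\log\log\log x}{\log\log\log x}$ for every fixed $C_{3}>1-C_{2}$ once $x$ is large. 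What then remains is bookkeeping — all the $o(1)$'s above are $\lesssim 1/\log\log\log x$, hence absorbed into the gap $C_{3}-(1-C_{2})>0$ — together with the re‑derivation of the factorization bound described above; one should also note that the numerator $y$ (rather than $x$) in Theorem~\ref{MnThm2} is what its proof actually yields.
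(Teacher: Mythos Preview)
Your proposal is correct and follows essentially the same route as the paper: extract from the interiors of \cite{So} and \cite{Go} a lower bound of the shape $\sum_{n\in I\cap\mathcal S(Q)}\tau_d(n)\gg y\,\rho(u/2)^2$, then apply H\"older with a slowly growing exponent $R=m\sim\frac{\log\log\log x}{\log\log\log\log x}$ and bound the high moment $\sum\tau_d(n)^{R}$ via Theorem~\ref{MnThm2} (equivalently Theorem~\ref{dfold}). The paper carries this out with $\tau_3$ in the Soundararajan case and $\tau_2$ in the Goudout case; your use of $\tau^{-}\le\tau$ is an equivalent packaging.

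One small caution: your description of Soundararajan's argument as producing, for a positive proportion of smooth $a\le\sqrt x$, many smooth $b$ with $ab\in I$, is not quite the structure of \cite{So}. That paper works with triples $n=rm_1m_2$ weighted by $\Lambda(r)$, and what one actually extracts (as the paper here does) is $I\ll\delta\log Q\sum_{n\in I\cap\mathcal S(Q)}\tau_3(n)$ together with the lower bound $I\gg x\delta^2\rho(u/2)^2\log^2 Q$, giving $\sum\tau_3(n)\gg y\,\rho(u/2)^2$. This feeds into your H\"older step just as well (replace $\tau$ by $\tau_3$, or use $\tau_3\le\tau^2$ and adjust the exponents), so the plan goes through unchanged; only your narrative of the extraction needs tweaking for the RH case.
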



\begin{proof}
In both of these cases, the first step is to find a lower bound for $$\sum_{\substack{x\leq n\leq x+y \\ n\in \mathcal S(Q) }}\tau_d(n)$$
for some $d$ and for $y$ as in the theorem.

We begin with \cite{So}.  Using the notation of that paper, we define $\delta$ such that
$$xe^{2\delta}=x+y.$$
Naturally, this means that $$\delta=\frac{y}{2x}(1+o(1)).$$
The paper also defines
$$I=\sum_{x\leq n\leq x+y}\sum_{\substack{n=rm_1m_2 \\ m_1,m_2\in \mathcal S(Q) \\ \sqrt xQ^{-\frac 13}\leq m_1,m_2\leq xQ^{-\frac 14} }}\Lambda(r)\min\{\log \frac{e^{2\delta}x}{n},\log \frac nx\}.$$
Note that $r\leq Q$, and hence $r$ is also $Q$-smooth.  So
$$I\ll \delta\log Q\sum_{\substack{x\leq n\leq x+y \\ n\in \mathcal S(Q) }}\tau_3(n).$$
This paper eventually finds that 
$$I\gg x\delta^2M(1)^2/2,$$
where $M(1)\geq \rho(u/2)\log Q/24$.
Putting these together, we have
$$x\delta\rho(u/2)^2\log Q\ll \log Q\sum_{\substack{x\leq n\leq x+y \\ n\in \mathcal S(Q) }}\tau_3(n),$$
which can be re-expressed as
\begin{gather}\label{zrho}y\rho(u/2)^2\ll \sum_{\substack{x\leq n\leq x+y \\ n\in \mathcal S(Q) }}\tau_3(n).
\end{gather}
Moving on to the unconditional case of \cite{Go}, the process is similar.  Near the end of Section 6 of \cite{Go}, the author of that paper finds that 
$$\rho(u/2)^2h\sqrt x\ll \sum_{\substack{x-2h\sqrt x<n_1n_2\leq x+2h\sqrt x \\ n_1\sim \sqrt x \\ n_1,n_2\in \mathcal S(Q)}}1.$$
Clearly,
$$\sum_{\substack{x-2h\sqrt x<n_1n_2\leq x+2h\sqrt x \\ n_1\sim \sqrt x \\ n_1,n_2\in \mathcal S(Q)}}1\ll \sum_{\substack{x-2h\sqrt x<n\leq x+2h\sqrt x \\ n\in \mathcal S(Q)}}\tau(n),$$
and hence if we rewrite $x-2h\sqrt x$ as $x$ and let $y=4h\sqrt x$ then we have
\begin{gather}\label{zrho2}y\rho(u/2)^2\ll \sum_{\substack{x<n\leq x+y \\ n\in \mathcal S(Q)}}\tau(n).\end{gather}
Now, let 
$$R=\frac{\log\log \log x}{\log \log \log \log x}.$$
Then by Holder's inequality,
$$\sum_{\substack{x\leq n\leq x+y \\ n\in \mathcal S(Q) }}\tau_d(n)\leq \left(\sum_{\substack{x\leq n\leq x+y \\ n\in \mathcal S(Q) }}1^{\frac{1}{1-\frac 1R}}\right)^{1-\frac 1R}\left(\sum_{\substack{x\leq n\leq x+y \\ n\in \mathcal S(Q) }}\tau_d(n)^{R}\right)^\frac 1R.$$
Applying Theorem \ref{dfold}, we that the above is
\begin{align*}\ll &\left(\sum_{\substack{x\leq n\leq x+y \\ n\in \mathcal S(Q) }}1\right)^{1-\frac 1R}\left(y \rho(u)^{C_2+o(1)}\left(\log x\right)^{d^{R}-1+\epsilon}\right)^\frac 1R\\
\ll &\left(\sum_{\substack{x\leq n\leq x+y \\ n\in \mathcal S(Q) }}1\right)^{1-\frac 1R}y^\frac 1R \rho(u)^{\frac{\log\log\log\log x}{\log\log\log x}\left(C_2+o(1)\right)}\left(\log x\right)^{\left(\log\log x\right)^{\frac{\log d}{\log\log\log\log x}}}\\
\ll & \left(\sum_{\substack{x\leq n\leq x+y \\ n\in \mathcal S(Q) }}1\right)^{1-\frac 1R}y^\frac 1R \rho(u)^{\frac{\log\log\log\log x}{\log\log\log x}\left(C_2+o(1)\right)}e^{u^\frac 23}.
\end{align*}
since $u\geq (\log\log x)^2$. 

Recall that
$$\rho(u)=\exp\left(-u\left(\log u+\log\log u-1+\frac{\log\log u-1}{\log u}+O\left(\left(\frac{\log\log u}{\log u}\right)^2\right)\right)\right).$$
So
$$\rho(u/2)^2=\exp\left(-u\left(\log u-\log 2+\log\log u-1+O\left(\frac{\log\log u}{\log u}\right)\right)\right).$$
Plugging into (\ref{zrho}) or (\ref{zrho2}), we then have
\begin{align*}
y\exp\left(-u\left(\log u+\log\log u+O(1)\right)\right)\ll \left(\sum_{\substack{x\leq n\leq x+y \\ n\in \mathcal S(Q) }}1\right)^{1-\frac 1R}y^\frac 1R\rho(u)^{\frac{\log\log\log\log x}{\log\log\log x}\left(C_2+o(1)\right)}e^{u^\frac 23},
\end{align*}
or
\begin{align*}
y\left[\exp\left(-\left(1-\frac{C_2}{R}\right)u\left(\log u+\log\log u+O(1)\right)\right)\right]^{1+\frac{1}{R}}\ll \sum_{\substack{x\leq n\leq x+y \\ n\in \mathcal S(Q) }}1,
\end{align*}
Distributing the exponent gives us a left-hand side of
$$y\exp\left(-\left(1+\frac{1-C_2}{R}-\frac{C_2}{R^2}\right)u\left(\log u+\log\log u+O(1)\right)\right).$$
Plugging back in for $R$ gives
\begin{align*}
y\rho(u)^{1+\frac{(1-C_2)\log\log\log \log x}{\log\log\log x}+O\left(\left(\frac{\log\log \log u}{\log \log \log \log u}\right)^2\right)}\ll \sum_{\substack{x\leq n\leq x+y \\ n\in \mathcal S(Q) }}1,
\end{align*}
which is as required.
\end{proof}
\section{Acknowledgements}

I would like to thank Sarvagya Jain for helpful comments and suggestions.

\bibliographystyle{line}

\end{document}